\documentclass[12pt,english,leqno]{amsart}
\usepackage[T1]{fontenc}
\usepackage[latin9]{inputenc}
\usepackage{amsthm}
\usepackage{amstext}
\usepackage{amssymb}

\makeatletter
\numberwithin{equation}{section}
\numberwithin{figure}{section}
\theoremstyle{plain}
\newtheorem{thm}{Theorem}
  \theoremstyle{plain}
  \newtheorem{prop}[thm]{Proposition}
  \theoremstyle{definition}
  \newtheorem{problem}[thm]{Problem}
  \theoremstyle{plain}
  \newtheorem{cor}[thm]{Corollary}
  \theoremstyle{plain}
  \newtheorem{lem}[thm]{Lemma}
 \theoremstyle{definition}
  \newtheorem{example}[thm]{Example}
  \theoremstyle{remark}
  \newtheorem*{claim*}{Claim}


 \bibliographystyle{plain}

\parindent=10pt

\newcommand{\inc}{\operatornamewithlimits{\subseteq}}

\newcommand{\union}{\operatornamewithlimits{\bigcup}}

\newcommand{\cl}{\operatorname{cl}}

\newcommand{\intr}{\operatorname{int}}
\newcommand{\adh}{\operatorname{adh}}

\newcommand{\lm}{\operatorname{lim}}

\newcommand{\Epi}{\operatorname{Epi}}
\newcommand{\card}{\operatorname{card}}

\def\A{{\mathcal A}}  \def\B{{\mathcal B}} \def\C{{\mathcal C}}
\def\D{{\mathcal D}}
 \def\E{{\mathcal E}}
\def\F{{\mathcal F}} \def\G{{\mathcal G}} \def\H{{\mathcal H}}

\def\J{{\mathcal J}}  
 \def\N{{\mathcal N}}
\def\O{{\mathcal O}} \def\P{{\mathcal P}}  
  \def\S{{\mathcal S}} 
\def\U{{\mathcal U}} \def\V{{\mathcal V}} 
\def\W{{\mathcal W}}

\def\r{{\mathbb R}}
\def\z{{\mathbb Z}}

\def\cuum{{\mathfrak c}}

\parindent=10pt

\makeatother

\usepackage{babel}

\makeatother

\usepackage{babel}

\begin{document}

\title[core-compactness]{Core compactness and diagonality in spaces of open sets}

\author{Francis Jordan}

\author{Fr\'ed\'eric Mynard}

\address{}

\email{fmynard@georgiasouthern.edu}

\email{fejord@hotmail.com}

\maketitle
\global\long\global\long\def\union{\bigcup}

\global\long\global\long\def\then{\Longrightarrow}

\date{\today}
\begin{abstract}
We investigate when the space $\mathcal O_X$ of open subsets of a topological space $X$ endowed with the Scott topology is core compact. Such conditions turn out to be related to infraconsonance of $X$, which in turn is characterized in terms of coincidence of the Scott topology of $\mathcal O_X\times\mathcal O_X$ with the product of the Scott topologies of $\mathcal O_X$ at $(X,X)$. On the other hand, we characterize diagonality of $\mathcal O_X$ endowed with the Scott convergence and show that this space can be diagonal without being pretopological. New examples are provided to clarify the relationship between pretopologicity, topologicity and diagonality of this important convergence space.
\end{abstract}

\section{Introduction}

Definitions and notations concerning convergence structures follow
\cite{dol.initiation} and are gathered as an appendix at the end
of these notes. If $X$ is a topological space, we denote by $\O_{X}$
the set of its open subsets. Ordered by inclusion, it is a complete
lattice in which the \textit{Scott convergence} (in the sense of,
for instance, \cite{contlattices}) is given by \begin{equation}
U\in\lm\F\iff U\subseteq\union_{F\in\F}\intr\left(\bigcap_{O\in F}O\right),\label{eq:scottconvergence}\end{equation}
 where $\F$ is a filter on $\O_{X}$; and its topological modification,
the \textit{Scott topology}, has open sets composed of compact families
(%
\footnote{See the Appendix for definitions%
}). $\O_{X}$ can be identified with the set $C(X,\$)$ of continuous
functions from $X$ to the Sierpi\'{n}ski space $\$$ because the
indicator function of $A\subseteq X$ is continuous if and only if
$A$ is open. Via this identification, the convergence \eqref{eq:scottconvergence}
coincides with the continuous convergence $[X,\$]$ on $C(X,\$)$,
and its topological modification $T[X,\$]$ coincides with the Scott
topology.

On the other hand, for a general convergence space $X$, the underlying
set of $[X,\$]$ can still be identified with the collection $\O_{X}$
of open subsets of $X$ (or $TX$ ), but the characterization \eqref{eq:scottconvergence}
of convergence in $[X,\$]$ (when interpreted as a convergence on
$\O_{X}$) needs to be modified. Recall that a family $\S$ of subsets
of a convergence space $X$ is a \textit{cover} if every convergent
filter on $X$ contains an element of the family $\S$. In general,
$U\in\lm_{[X,\$]}\F$ if the family $\{\bigcap_{O\in F}O:F\in\F\}$
is a cover of $U$ (for the induced convergence).

Given a convergence space $X$, it is known (e.g., \cite{schwarz.compatible},
\cite{DM.mech}) that the following are equivalent: \begin{eqnarray}
\forall Y,\,\, T(X\times Y)\leq X\times TY;\label{eq:partcommute}\\
T(X\times[X,\$])\leq X\times T[X,\$];\\
\left[X,\$\right]=T[X,\$].\end{eqnarray}
 Let us call a convergence space $X$ satisfying this condition $T$-\textit{dual}.
In the case where $X$ is topological, the latter is well-known to
be equivalent to core compactness of $X$ (e.g., \cite{hofflawson},
\cite{schwarz.powers}). Recall that a topological space $X$ is \textit{core
compact} if for every $x$ and $O\in\O(x)$, there is $U\in\O(x)$
such that every open cover of $O$ has a finite subfamily that covers
$U$. In \cite{DM.mech}, a \textit{convergence space} is called \textit{core
compact} if whenever $x\in\lim\F$, there is $\G\leq\F$ with $x\in\lim\G$
and for every $G\in\G$ there is $G'\in\G$ such that $G'$ is compact
at $G$. A convergence space is called $T$-\textit{core compact}
if whenever $x\in\lim\F$ and $U\in\O_{TX}(x)$, there is $F\in\F$
that is compact at $U$. It is shown in \cite{DM.mech} that \begin{equation}
X\text{ is core compact }\then X\text{ is \ensuremath{T}-dual }\then X\text{ is \ensuremath{T}-core compact.}\label{eq:corevariants}\end{equation}
 The three notions clearly coincide if $X$ is topological. However,
so far, it was not known whether they do in general. At the end of
the paper, we provide examples (Example \ref{exa:tdualnotcorec} and
Example \ref{exa:TcorenotTdual}) showing that none of the arrows
can be reversed for general convergence spaces.

It was observed in \cite{dcpoconv} that if $X$ is topological, then
so is $[[X,\$],\$]$. Therefore $[X,\$]$ is then $T$-core compact,
which makes $[X,\$]$ for $X$ topological but not core compact a
natural candidate to distinguish core-compactness from $T$-core compactness.
This however fails, in view of Proposition \ref{pro:dualcore} below.
In the next section, we also investigate under what condition $T[X,\$]$
(that is $\O_{X}$ with the Scott topology) is core compact. This
question, while natural in itself, is motivated by its connection
with the (now recently solved) problem \cite[Problem 1.2]{dolmyn.isbell}
of finding a completely regular infraconsonant topological space that
is not consonant (see section 3 for definitions). We observe in Section
3 that $X$ is infraconsonant whenever $T[X,\$]$ is core compact
and we prove more generally that $X$ is infraconsonant if and only
if the Scott topology on $\O_{X}\times\O_{X}$ for the product order
coincides with the product of the Scott topologies at the point $(X,X)$
(Theorem \ref{th:prodscottinfraC}). Infraconsonance was introduced
while studying the Isbell topology on the set of real-valued continuous
functions over a topological space. In fact a completely regular space
$X$ is infraconsonant if and only if the Isbell topology on the set
of real-valued continuous functions on $X$ is a group topology \cite[Corollary 4.6]{DJM.group}.
On the other hand, the fact that the Scott topology on the product
does not coincide in general with the product of the Scott topologies
has been at the origin of a number of problems and errors (e.g., \cite[p.197]{contlattices}).
Therefore, Theorem \ref{th:prodscottinfraC} provides new motivations
to investigate infraconsonance.

In section 4, we show that, for a topological space $X$, despite
the fact that $[X,\$]$ is topological whenever it is pretopological,
$[X,\$]$ does not need to be diagonal in general. Diagonality of
$[X,\$]$ is characterized in terms of a variant of core-compactness
that do not need to coincide with core-compactness.

\section{core-compactness of $\O_{X}$}

As $[X,\$]$ can be identified with $\O_{X}$ for any convergence
space $X$, the space $[[X,\$],\$]$ has as underlying set the set
of Scott-open subsets of $\O_{X}$, that is, if $X$ is topological,
the set $\kappa(X)$ of openly isotone compact families on $X$. Note
that the family \[
\{U^{+}:=\{\A\in\kappa(X):U\in\A\}:U\in\O_{X}\}\]
 forms a subbase for a topology on $\kappa(X)$, called \textit{Stone
topology}. 

As observed in \cite[Proposition 5.2]{DM.transfer}, when $X$ is
topological, the convergence $[X,\$]$ is based in filters of the
form \begin{equation}
\O^{\natural}(\P):=\{\O(P):P\in\P\},\label{eq:basedual}\end{equation}
 where $\P$ is an \textit{ideal subbase} of open subsets of $X$,
that is, such that there is $P\in\P$ with $\bigcup_{Q\in\P_{0}}Q\subseteq P$
whenever $\P_{0}$ is a finite subfamily of $\P$. More precisely,
for every filter $\F$ on $[X,\$]$ with $U\in\lim_{[X,\$]}\F$ there
is an open cover $\P$ of $U$ that forms an ideal subbase, such that
$U\in\lm_{[X,\$]}\O^{\natural}(\P)$ and $\O^{\natural}(\P)\leq\F$.

Note also that \begin{equation}
\A\subseteq\B\then\A\in\lm_{[[X,\$],\$]}\{\B\}^{\uparrow},\label{eq:ptinbidual}\end{equation}
for every $\A$ and $\B$ in $\kappa(X)$. In particular if $O$ is
$[[X,\$],\$]$-open, $\A\in O$ and $\A\subseteq\B\in\kappa(X)$ then
$\B\in O$. \[
\]

\begin{prop}
\label{pro:dualcore} If $X$ is topological, then $[X,\$]$ is core
compact, so that $[[X,\$],\$]$ is topological. More specifically,
$[[X,\$],\$]$ can be identified with the space $\kappa(X)$ with
the Stone topology. \end{prop}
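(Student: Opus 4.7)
The plan is to derive core compactness of $[X,\$]$ directly from the ideal subbase representation of convergent filters recalled in \eqref{eq:basedual}; the topologicity of $[[X,\$],\$]$ will then be immediate from \eqref{eq:corevariants}, and the identification with $\kappa(X)$ endowed with the Stone topology will follow by a direct inspection of the continuous convergence.

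For core compactness, given $U \in \lim_{[X,\$]} \F$, I would apply \eqref{eq:basedual} to extract an open cover $\P$ of $U$ forming an ideal subbase, with $\O^{\natural}(\P) \le \F$ and $U \in \lim_{[X,\$]} \O^{\natural}(\P)$. Setting $\G = \O^{\natural}(\P)$, each $G \in \G$ has the form $\O(P)$ for some $P \in \P$; the ideal subbase property supplies $P' \in \P$ with $P \subseteq P'$, and I would take $G' = \O(P')$, which is contained in $G$. The crucial step is to show that $\O(P')$ is compact at $\O(P)$ in $[X,\$]$. For this I would prove that for every filter $\H$ on $\O_X$ with $\O(P') \in \H$, the open set $P'$ itself lies in $\lim_{[X,\$]} \H$ (hence in $\adh_{[X,\$]} \H$), while $P' \in \O(P)$ because $P \subseteq P'$. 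The whole computation reduces to the elementary identity $\bigcap_{V \in \O(P')} V = P'$ — which holds since $P'$ is open and equals the smallest open set containing $P'$ — whence $\intr(\bigcap_{V \in \O(P')} V) = P'$, and the Scott-convergence criterion \eqref{eq:scottconvergence} is met using the element $\O(P')$ of $\H$.

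With $[X,\$]$ now core compact, \eqref{eq:corevariants} implies that $[X,\$]$ is $T$-dual, hence $[[X,\$],\$]$ agrees with its topological modification and is in particular topological. Its underlying set is $\O_{T[X,\$]} = \kappa(X)$, so only the topology needs identification. To recognise it as the Stone topology, I would analyse continuous convergence $\mathcal{W} \to \A$ against the canonical filters $\F = \O^{\natural}(\P)$ converging to some $U \in \A$. Using openly isotonicity of members of $\kappa(X)$, the condition that $\mathcal{W}(\F)$ converges to $\top$ in $\$$ reduces to the existence of some $P_0 \in \P$ with $P_0^+ \in \mathcal{W}$. Specialising to the trivial cover $\P = \{U\}$ gives $U^+ \in \mathcal{W}$ for every $U \in \A$, which is precisely Stone convergence. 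Conversely, given Stone convergence and any ideal cover $\P$ of $U \in \A$, openly isotonicity of $\A$ gives $\bigcup \P \in \A$, and compactness of $\A$ applied to the directed family $\P$ produces some $P_0 \in \P$ already in $\A$, whence $P_0^+ \in \mathcal{W}$ by hypothesis.

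The most delicate step is the compactness-at verification inside the core compactness argument; once the filter base $\O^{\natural}(\P)$ is in play, the whole argument hinges on the single observation $\bigcap \O(P') = P'$, which makes $\O(P')$ behave like a compact neighbourhood of $P'$ in $[X,\$]$. Everything else is careful bookkeeping with the characterisations \eqref{eq:scottconvergence} and \eqref{eq:basedual} together with the openly isotone and compact structure of elements of $\kappa(X)$.
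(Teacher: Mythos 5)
Your argument is correct, and its first half is essentially the paper's: both extract the canonical base $\O^{\natural}(\P)$ from \eqref{eq:basedual} and reduce compactness of the sets $\O(P)$ to the identity $P=\intr\bigl(\bigcap_{V\in\O(P)}V\bigr)$, which shows $P\in\lm_{[X,\$]}\{\O(P)\}^{\uparrow}$ (your detour through a larger $P'\in\P$ is harmless but unnecessary, since $\O(P)$ is already compact at itself; also note that compactness at a set requires handling filters that merely \emph{mesh} with $\O(P')$, which you recover by passing to $\H\vee\{\O(P')\}^{\uparrow}$). Where you genuinely diverge is the identification with the Stone topology. The paper argues at the level of open sets: it verifies that each $U^{+}$ is $[[X,\$],\$]$-open, and then shows by contradiction --- building a filter from the sets $\widehat{U}=\{\B\in\kappa(X):U\in\B,\ \B\notin O\}$ --- that every $[[X,\$],\$]$-open $O$ contains some $U^{+}$ with $U\in\A$ around each of its points $\A$. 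You instead compute the continuous convergence directly: testing a filter $\W$ against the base filters $\O^{\natural}(\P)$ and using open isotony of members of $\kappa(X)$, you reduce $\A\in\lm_{[[X,\$],\$]}\W$ to the condition that for every $U\in\A$ and every ideal open cover $\P$ of $U$ there is $P_{0}\in\P$ with $P_{0}^{+}\in\W$, and then match this with Stone convergence via the trivial cover in one direction and via compactness of $\A$ plus the ideal-subbase property (yielding $P_{0}\in\P\cap\A$) in the other. Both routes ultimately rest on that same extraction of $P_{0}\in\P\cap\A$; yours avoids the contradiction argument and in fact re-proves topologicity of $[[X,\$],\$]$ directly (the convergence is exhibited as a topological one), at the cost of having to justify that it suffices to test convergence against the filters $\O^{\natural}(\P)$ only --- which does hold, since they form a base for $[X,\$]$ and the evaluation condition is antitone in the test filter, but deserves an explicit word.
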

\begin{proof}
Let $U\in\lm_{[X,\$]}\O^{\natural}(\P)$ for an ideal subbase $\P$
of open subsets of $X$. Then for each $P\in\P$, the set $\O(P)$
is a compact subset of $[X,\$]$ because $P\in\lm_{[X,\$]}\O(P)$.
Indeed, $P=\intr\left(\bigcap_{O\in\O(P)}O\right)$.

$U^{+}$ is $[[X,\$],\$]$-open for each $U\in\O_{X}$. Indeed, if
$\A\in U^{+}\cap\lim_{[[X,\$],\$]}\F$ then $\left\{ \bigcap_{\B\in F}\B:F\in\F\right\} $
is a cover of $\A$ (in the sense of convergence) so that there is
$F\in\F$ with $\bigcap_{\B\in F}\B\in\{U\}^{\uparrow}$ because $U\in\lm_{[X,\$]}\{U\}^{\uparrow}\cap\A$.
In other words, $F\inc U^{+}$, so that $U^{+}\in\F$.

Conversely, if $O$ is $[[X,\$],\$]$-open and $\A\in O$, there is
$U\in\A$ such that $U^{+}\inc O$. Otherwise, for each $U\in\A$,
there is $\B\in\kappa(X)$ with $U\in\B$ and $\B\notin O$. In that
case, $\widehat{U}:=\{\B\in\kappa(X):U\in\B,\B\notin O\}\neq\emptyset$
for all $U\in\A$. Note also that in view of \ref{eq:ptinbidual},
$\B_{U}\cap\B_{V}\in\widehat{U}\cap\widehat{V}$ whenever $\B_{U}\in\widehat{U}$
and $\B_{V}\in\widehat{V}$. Therefore $\left\{ \bigcap_{i\in I}\widehat{U_{i}}:U_{i}\in\A:\card I<\infty\right\} $
is a filter-base generating a filter $\F$. This filter converges
to $\A$ in $[[X,\$],\$]$. To show that, we need to see that $\left\{ \bigcap_{\B\in\widehat{U}}\B:U\in\A\right\} $
is a cover of $\A$ for $[X,\$]$. In view of the form \ref{eq:basedual}
of a base for $[X,\$]$, it is enough to show that if $U_{0}\in\A$
and $\P$ is an ideal subbase of open subsets of $X$ covering $U_{0}$,
then there is $A\in\A$ with $\bigcap_{\B\in\widehat{A}}\B\in\O^{\natural}(\P)$.
Because $U_{0}\subseteq\bigcup_{P\in\P}P$ and $\A$ is a compact
family, there is a finite subfamily $\P_{0}$ of $\P$ such that $\bigcup_{P\in\P_{0}}P\in\A$.
Since $\P$ is an ideal subbase, there is $P\in\P\cap\A$. Then $\O(P)\subseteq\bigcap_{\B\in\widehat{P}}\B$,
which concludes the proof that $\A\in\lm_{[[X,\$],\$]}\F$. On the
other hand, $O\notin\F$, which contradicts the fact that $O$ is
open for $[[X,\$],\$]$.
\end{proof}
In order to investigate when $T[X,\$]$, that is, $\O_{X}$ with the
Scott topology, is core-compact, we will need notions and results
from \cite{DM.mech}. The concrete endofunctor $\Epi_{T}$ of the
category of convergence spaces (and continuous maps) is defined (on
objects) by \[
\Epi_{T}X=i^{-}[T[X,\$],\$]\]
 where $i:X\to[[X,\$],\$]$ is defined by $i(x)(f)=f(x)$. In view
of \cite[Theorem 3.1]{DM.mech} \begin{equation}
W\geq\Epi_{T}X\iff T[X,\$]\geq[W,\$],\label{eq:EpiT}\end{equation}
 where $X\geq W$ have the same underlying set. In particular, $X$
is $T$-dual if and only if $X\geq\Epi_{T}X$. A convergence space
$X$ is \textit{epitopological} if $i:X\to[[X,\$],\$]$ is initial
(in the category \textbf{Conv} of convergence spaces and continuous
maps). Epitopologies form a reflective subcategory \textbf{Epi} of
\textbf{Conv} and the (concrete) reflector is given (on objects) by
$\Epi X=i^{-}[[X,\$],\$]$. Because $[\Epi X,\$]=[X,\$]$, we assume
from now on that every space is epitopological. Observe that a topological
space is epitopological. Note that if $[X,\$]$ is $T$-dual, then
$\Epi X=X$ is topological. Therefore, in contrast to Proposition
\ref{pro:dualcore}, $[X,\$]$ is not $T$-dual if $X$ is not topological. 
\begin{prop}
\label{pro:notcore} Let $X$ be an epitopological space. Then $X$
is topological if and only if $[X,\$]$ is $T$-dual. 
\end{prop}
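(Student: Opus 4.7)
The plan is to split the equivalence into its two directions. The forward direction ($X$ topological implies $[X,\$]$ is $T$-dual) is immediate from Proposition~\ref{pro:dualcore}: that result gives that $[X,\$]$ is core compact whenever $X$ is topological, and the implication chain~\eqref{eq:corevariants} then upgrades core-compactness to $T$-duality. So no extra work is needed on this side.

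For the reverse direction, my starting point is to unpack what it means for $[X,\$]$ itself to be $T$-dual. Among the equivalent conditions defining $T$-duality recalled in the introduction is the condition $[Y,\$]=T[Y,\$]$, that is, $[Y,\$]$ is topological; applying this with $Y=[X,\$]$ shows that $[X,\$]$ is $T$-dual precisely when $[[X,\$],\$]$ is a topological space. I would then bring in the description $\Epi X = i^{-}[[X,\$],\$]$ recalled just before the proposition. This realizes $\Epi X$ as the initial lift along the \emph{single} map $i:X\to[[X,\$],\$]$ whose target is now topological, and since $X$ is assumed epitopological, we have $X=\Epi X$; so once the initial lift is shown to be topological, the proof is finished.

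The only step with any content is thus the verification that the initial convergence in \textbf{Conv} with respect to a single map into a topological space is itself topological. I would handle this by a direct computation: a filter $\F$ converges to $x$ in $i^{-}[[X,\$],\$]$ iff $i(\F)\geq\N_{[[X,\$],\$]}(i(x))$, iff $\F$ contains $i^{-1}(V)$ for every open $V\ni i(x)$, iff $\F$ refines the neighborhood filter of $x$ in the initial topology on $X$ generated by the sets $i^{-1}(V)$. This identification of the initial convergence with the initial topology is the main (and essentially only) potential obstacle, since initial lifts in \textbf{Conv} of topological targets need not be topological when a family of maps is involved; what rescues the argument here is that $\Epi$ is defined through the single canonical map $i$.
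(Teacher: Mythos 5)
Your argument is correct and is essentially the one the paper intends (no formal proof is printed; the remarks immediately preceding the proposition carry out exactly this reasoning): $X$ topological gives core compactness of $[X,\$]$ by Proposition~\ref{pro:dualcore} and hence $T$-duality via~\eqref{eq:corevariants}, while conversely $T$-duality of $[X,\$]$ makes $[[X,\$],\$]$ topological, so $\Epi X=i^{-}[[X,\$],\$]$ is topological and equals $X$ by epitopologicity. Your one cautionary aside is actually unfounded rather than a real obstacle: the appendix notes that \textbf{Top} is closed under \emph{all} initial constructions in \textbf{Conv} (families included), though your proof only needs the single-map case, which you verify correctly.
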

Note also that $\Epi X\leq\Epi_{T}X$ and that $\Epi_{T}\circ\Epi=\Epi_{T}$,
so that $\Epi_{T}$ restricts to an expansive endofunctor of \textbf{Epi}.
By iterating this functor, we obtain the coreflector on $T$-dual
epitopologies. More precisely, if $F$ is an expansive concrete endofunctor
of \textbf{C}, we define the transfinite sequence of functors $F^{\alpha}$
by $F^{1}=F$ and $F^{\alpha}X=F\left(\bigvee_{\beta<\alpha}F^{\beta}X\right)$.
For each epitopological space $X$, there is $\alpha(X)$ such that
\[
\Epi_{T}^{\alpha(X)}X=\Epi_{T}^{\alpha(X)+1}X:=d_{T}X.\]

\begin{prop}
\label{pro:Tdual} The class of $T$-dual epitopologies is concretely
coreflective in \textbf{Epi} and the coreflector is $d_{T}$. \end{prop}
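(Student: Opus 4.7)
My plan is to apply to $F=\Epi_{T}$ on \textbf{Epi} the general recipe that turns an expansive concrete endofunctor into the coreflector onto its fixed-point subcategory.

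First, I would verify by transfinite induction that $(\Epi_{T}^{\alpha}X)_{\alpha}$ is $\geq$-monotone for every epitopology $X$: at a successor, the definition together with expansiveness gives
\[
\Epi_{T}^{\alpha+1}X=\Epi_{T}\bigl(\bigvee_{\beta\leq\alpha}\Epi_{T}^{\beta}X\bigr)\geq\bigvee_{\beta\leq\alpha}\Epi_{T}^{\beta}X\geq\Epi_{T}^{\alpha}X,
\]
and limit stages are immediate. Since the epitopological convergences on a fixed set form a set, the $\geq$-increasing transfinite chain cannot grow through all ordinals and hence stabilizes at a least ordinal $\alpha(X)$ with $\Epi_{T}^{\alpha(X)+1}X=\Epi_{T}^{\alpha(X)}X$; this is $d_{T}X$. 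Monotonicity identifies $\bigvee_{\beta\leq\alpha(X)}\Epi_{T}^{\beta}X$ with $\Epi_{T}^{\alpha(X)}X$, so $\Epi_{T}(d_{T}X)=d_{T}X$; combined with expansiveness and the criterion in (\ref{eq:EpiT}) that $Y$ is $T$-dual iff $Y\geq\Epi_{T}Y$, this shows $d_{T}X$ is $T$-dual, and clearly $d_{T}X\geq X$.

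Next, I would establish the universal property: if $W$ is a $T$-dual epitopology and $f:W\to X$ is continuous, then $f:W\to d_{T}X$ is continuous. The lever is that $\Epi_{T}$, being a concrete functor, sends any continuous $g$ to a continuous $g$ between the $\Epi_{T}$-images, and in particular is order-preserving on convergence structures sharing an underlying set. A transfinite induction then shows $f:W\to\Epi_{T}^{\alpha}X$ is continuous for every $\alpha\leq\alpha(X)$: at a successor, applying $\Epi_{T}$ to $f:W\to\Epi_{T}^{\alpha}X$ yields continuity of $f:\Epi_{T}W\to\Epi_{T}^{\alpha+1}X$, and $\Epi_{T}W=W$ since $W$ is $T$-dual; at a limit, the inductive hypotheses and the universal property of the join give continuity of $f:W\to\bigvee_{\beta<\alpha}\Epi_{T}^{\beta}X$, to which $\Epi_{T}$ is applied once more. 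Functoriality of $d_{T}$ on morphisms is then a formal consequence: for $f:Y\to X$ continuous, the composite $d_{T}Y\to Y\xrightarrow{f}X$ is continuous with $d_{T}Y$ $T$-dual, so the universal property provides continuity of $f:d_{T}Y\to d_{T}X$.

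The main point calling for care is the limit-ordinal step of both inductions, where one worries about the behaviour of $\Epi_{T}$ on suprema. No genuine preservation of joins is needed, however: monotonicity of $\Epi_{T}$ (automatic from concrete functoriality, since $V\geq U$ is the statement that $\id:V\to U$ is continuous) is enough, because it is only invoked after the universal property of the supremum has done its work. Otherwise the argument is the formal mechanics of coreflection; all serious content has already been packaged into the earlier observation that $\Epi_{T}$ is an expansive concrete endofunctor of \textbf{Epi}.
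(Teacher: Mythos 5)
Your argument is correct, but it takes a genuinely different route from the paper. You verify the coreflection's universal property directly: a transfinite induction shows the chain $(\Epi_{T}^{\alpha}X)_{\alpha}$ is increasing and stabilizes at a fixed point $d_{T}X$ of $\Epi_{T}$ (hence $T$-dual, by the criterion $W\geq\Epi_{T}W$ coming from \eqref{eq:EpiT}), and a second induction, using concreteness of $\Epi_{T}$ together with $\Epi_{T}W=W$ for a $T$-dual epitopology $W$, lifts any continuous $f\colon W\to X$ to a continuous $f\colon W\to d_{T}X$. The paper instead first proves that the class of $T$-dual convergences is closed under infima, via the identity $\left[\bigwedge_{i}X_{i},\$\right]=\bigvee_{i}[X_{i},\$]$; this yields the existence of a coarsest $T$-dual convergence $\overline{X}$ finer than $X$, squeezed between $X$ and $d_{T}X$. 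It then identifies $\overline{X}$ with $d_{T}X$ by a short adjunction-style argument: $[\overline{X},\$]$ is topological and coarser than $[X,\$]$, hence coarser than $T[X,\$]$, so $\overline{X}\geq\Epi_{T}X$ by \eqref{eq:EpiT}, and since $\overline{X}=\Epi_{T}\overline{X}$ this iterates to $\overline{X}\geq d_{T}X$. Your proof is more self-contained and would work verbatim for any expansive concrete endofunctor of \textbf{Epi}, and it makes the morphism-level universal property explicit where the paper leaves it implicit; the paper's proof is shorter, replaces your second transfinite induction by one application of \eqref{eq:EpiT}, and records along the way the independently useful fact that $T$-duality is stable under infima. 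The only step of yours that deserves an explicit flag is the identification $\Epi_{T}^{\alpha+1}X=\Epi_{T}(\Epi_{T}^{\alpha}X)$, which is legitimate only after monotonicity of the chain has collapsed $\bigvee_{\beta\leq\alpha}\Epi_{T}^{\beta}X$ to $\Epi_{T}^{\alpha}X$ --- but you do note this.
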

\begin{proof}
The class of $T$-dual convergences is closed under infima because
\[
\left[\bigwedge_{i\in I}X_{i},Z\right]=\bigvee_{i\in I}[X_{i},Z].\]
 Indeed, if each $X_{i}$ is $T$-dual, then \[
\left[\bigwedge_{i\in I}X_{i},\$\right]=\bigvee_{i\in I}[X_{i},\$]=\bigvee_{i\in I}T[X_{i},\$]\leq T\left(\bigvee_{i\in I}[X_{i},\$]\right)=T\left(\left[\bigwedge_{i\in I}X_{i},\$\right]\right),\]
 and $\bigwedge_{i\in I}X_{i}$ is $T$-dual. The functor $\Epi_{T}$
is expansive on \textbf{Epi} and therefore, so is $d_{T}$. Moreover,
$d_{T}X$ is $T$-dual for each epitopological space $X$ because
\[
[d_{T}X,\$]=[\Epi_{T}^{\alpha(X)+1}X,\$]\leq T[\Epi_{T}^{\alpha(X)}X,\$]=T[d_{T}X,\$].\]
 Therefore, for each epitopological space $X$, there exists the coarsest
$T$-dual convergence $\overline{X}$ finer than $X$. By definition
$X\leq\overline{X}\leq d_{T}X$. Then $[\overline{X},\$]\leq[X,\$]$
and $[\overline{X},\$]$ is topological, so that $[\overline{X},\$]\leq T[X,\$]$.
But $\Epi_{T}X$ is the coarsest convergence with this property. Therefore
$\Epi_{T}X\leq\overline{X}=\Epi_{T}\overline{X}$ and $d_{T}X\leq\overline{X}$. \end{proof}
\begin{prop}
\label{pro:coretocore} If $X$ is a core compact topological space,
so is $T[X,\$]$. \end{prop}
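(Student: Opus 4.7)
The plan is to reduce the claim to Proposition \ref{pro:dualcore} via the equivalence, stated in the introduction, that for a topological space $Y$ core compactness is the same as $T$-duality, i.e.\ the same as the equality $[Y,\$]=T[Y,\$]$; for a topological $Y$, this in turn is nothing but the topologicity of the convergence $[Y,\$]$. Applied to $Y=T[X,\$]$ (which is topological by construction), the goal becomes showing that $[T[X,\$],\$]$ is topological.

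First I would use the hypothesis: since $X$ is topological and core compact, $X$ is $T$-dual, so $[X,\$]=T[X,\$]$ as convergences on $\O_X$. Consequently, the two function spaces $[T[X,\$],\$]$ and $[[X,\$],\$]$ coincide.

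Next I invoke Proposition \ref{pro:dualcore} for $X$ itself: since $X$ is topological, $[X,\$]$ is core compact and $[[X,\$],\$]$ is a topological space (in fact the Stone topology on $\kappa(X)$). Combining with the equality above gives that $[T[X,\$],\$]$ is topological. By the equivalence recalled in the first paragraph, this is exactly what core compactness of the topological space $T[X,\$]$ amounts to, finishing the proof.

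Rather than being a technical obstacle, the work here is to notice the short chain of identifications: the heavy lifting (that $[[X,\$],\$]$ is always topological for topological $X$) is done by Proposition \ref{pro:dualcore}, and core compactness of $X$ is used only to replace $[X,\$]$ by $T[X,\$]$ inside the outer bracket. No direct manipulation of Scott-open sets or compact families on $\O_X$ is needed.
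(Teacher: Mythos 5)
Your proof is correct and follows essentially the same route as the paper's: both arguments combine the fact that core compactness of the topological space $X$ forces $[X,\$]=T[X,\$]$ with Proposition \ref{pro:dualcore} applied to $X$, and then invoke the coincidence of core compactness with $T$-duality for topological spaces. Your version merely unwinds the $T$-duality of $[X,\$]$ one step further into the statement that $[T[X,\$],\$]=[[X,\$],\$]$ is topological, which is the same content in different packaging.
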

\begin{proof}
Under these assumptions, $[X,\$]=T[X,\$]$ and $[X,\$]$ is $T$-dual
by Proposition \ref{pro:dualcore}. Therefore $T[X,\$]$ is a core
compact topology. 
\end{proof}
However, if $X$ is a non-topological $T$-dual convergence space
(%
\footnote{Such convergences exist: take for a instance a non-locally compact
Hausdorff regular topological $k$-space. Then $X=TK_{h}X$ but $X<K_{h}X$
so that $K_{h}X$ is non-topological.%
}), then $[X,\$]=T[X,\$]$ is \textit{not} core compact, by Proposition
\ref{pro:notcore}. In other words, we have: 
\begin{prop}
If $X$ is $T$-dual then $X$ is topological if and only if $[X,\$]=T[X,\$]$
is core compact. 
\end{prop}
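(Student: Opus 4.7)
The plan is to recognize that this statement is essentially a repackaging of the results already established in this section, combined with the classical equivalence (for topological $X$) between $T$-duality and core compactness noted in the introduction. Since $X$ is assumed $T$-dual, the equality $[X,\$]=T[X,\$]$ is automatic, so the content is about when this common convergence is core compact.

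For the forward implication, I would assume $X$ is topological. By the classical result recalled in the introduction (for topological $X$, $T$-duality is equivalent to core compactness), the hypothesis that $X$ is $T$-dual forces $X$ to be a core compact topological space. Proposition \ref{pro:coretocore} then gives that $T[X,\$]$ is core compact, and by $T$-duality this is $[X,\$]=T[X,\$]$.

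For the converse, assume $[X,\$]=T[X,\$]$ is core compact. Applying the chain of implications \eqref{eq:corevariants} with $[X,\$]$ in the role of the space, core compactness of $[X,\$]$ implies that $[X,\$]$ is $T$-dual. Invoking Proposition \ref{pro:notcore} (which is legitimate since from the paragraph preceding it we are working under the blanket assumption that every space is epitopological), $T$-duality of $[X,\$]$ forces $X$ to be topological, completing the proof.

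No step looks like a serious obstacle: everything is a direct consequence of results already in hand. The only point requiring mild care is making sure that Proposition \ref{pro:notcore} is applicable, i.e. that $X$ is epitopological; but this is covered by the standing assumption adopted in the paper just before Proposition \ref{pro:notcore}, so no extra argument is needed.
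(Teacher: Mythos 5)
Your proof is correct and matches the argument the paper leaves implicit: the converse is exactly the paper's route (core compactness of $[X,\$]=T[X,\$]$ yields $T$-duality of $[X,\$]$ via \eqref{eq:corevariants}, and Proposition \ref{pro:notcore}, applicable under the standing epitopological assumption, then forces $X$ to be topological). For the forward direction you take a slight detour through core compactness of $X$ and Proposition \ref{pro:coretocore}, whereas the paper obtains core compactness of $[X,\$]$ directly from Proposition \ref{pro:dualcore} and uses $T$-duality of $X$ only to identify $[X,\$]$ with $T[X,\$]$; both versions are sound.
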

In particular, $d_{T}X$ is topological if and only if $[d_{T}X,\$]$
is core compact. 
\begin{thm}
\label{th:quotientofcore} If $X\geq Td_{T}X$ then $T[X,\$]$ is
core compact if and only if $X$ is a core compact topological space. \end{thm}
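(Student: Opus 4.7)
The plan is to handle the two directions separately. The direction $(\Leftarrow)$ is a direct invocation of Proposition~\ref{pro:coretocore} and does not even require the hypothesis $X\geq Td_{T}X$: if $X$ is a core compact topological space, then $T[X,\$]$ is core compact.

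For the harder direction $(\Rightarrow)$, assume $T[X,\$]$ is core compact. The strategy is to force $X$ to coincide with $\Epi_{T}X$ by a sandwich argument; once this is established, topologicity and $T$-duality of $X$ both follow at once from $X=\Epi_{T}X$, and core compactness of $X$ then follows from the classical equivalence, for topological spaces, between $T$-duality and core compactness recalled in the introduction. The first step is to notice that the core compact topological space $Y:=T[X,\$]$ is $T$-dual in the classical sense, so $[Y,\$]=T[Y,\$]$ is topological. Since $\Epi_{T}X=i^{-}[T[X,\$],\$]=i^{-}[Y,\$]$ is the initial lift of a topological convergence in \textbf{Conv}, it is itself topological, and in particular $T\Epi_{T}X=\Epi_{T}X$.

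Next I would exploit expansivity of $\Epi_{T}$ on \textbf{Epi} together with the transfinite construction of $d_{T}X$ to obtain $d_{T}X\geq\Epi_{T}X$, and then apply monotonicity of the topological modification to get
\[
Td_{T}X\geq T\Epi_{T}X=\Epi_{T}X.
\]
Combining this with the standing hypothesis $X\geq Td_{T}X$ and with $\Epi_{T}X\geq X$ (expansivity again) yields the chain $X\geq Td_{T}X\geq\Epi_{T}X\geq X$, which forces equality throughout. Hence $X=\Epi_{T}X$ is simultaneously topological and $T$-dual, and the classical characterization then yields that $X$ is core compact, which is the desired conclusion.

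The main obstacle is the bridging step that converts the hypothesis $X\geq Td_{T}X$ into a genuine upper bound on $X$. Expansivity of $\Epi_{T}$ readily gives the lower bound $\Epi_{T}X\geq X$, but supplies no corresponding upper bound; the monotonicity of $T$ applied to $d_{T}X\geq\Epi_{T}X$, together with the fact that $\Epi_{T}X$ is already topological (so that $T$ leaves it fixed), is precisely what is needed to turn $X\geq Td_{T}X$ into $X\geq\Epi_{T}X$ and thereby close the sandwich. Recognizing that the hypothesis of the theorem is tailored exactly for this bridging — and that everything else is essentially bookkeeping with the functors $\Epi_{T}$, $T$, and $d_{T}$ established in the preceding results — is the conceptual crux.
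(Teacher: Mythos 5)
Your proof is correct and follows essentially the same route as the paper's: both directions hinge on showing that core compactness of $T[X,\$]$ makes $\Epi_{T}X=i^{-}[T[X,\$],\$]$ topological and then exploiting the chain $X\geq Td_{T}X\geq T\Epi_{T}X=\Epi_{T}X$ to get $T$-duality of $X$. The only (harmless) variation is at the very end: you deduce topologicity of $X$ directly from the sandwich equality $X=\Epi_{T}X$, whereas the paper invokes Proposition~\ref{pro:notcore} after noting that $[X,\$]=T[X,\$]$ is core compact, hence $T$-dual.
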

\begin{proof}
We already know that if $X$ is a core compact topological space then
$[X,\$]=T[X,\$]$ and that $[X,\$]$ is core compact by Proposition
\ref{pro:dualcore}. Conversely, if $T[X,\$]$ is core compact then
$[T[X,\$],\$]$ is topological, so that $\Epi_{T}X$ is topological.
Under our assumptions, we have \[
X\geq Td_{T}X\geq T\Epi_{T}X=\Epi_{T}X,\]
 so that $X$ is $T$-dual. Therefore $[X,\$]=T[X,\$]$ is core compact
and, in view of Proposition \ref{pro:notcore}, $X$ is topological,
and $T$-dual, hence a core compact topological space.
\end{proof}
Note that, at least among Hausdorff topological spaces, Theorem \ref{th:quotientofcore}
generalizes \cite[Corollary 3.6]{corecompactdual} that states that
if $X$ is first countable, then $X$ is core compact if and only
if $T[X,\$]$ is core compact. Indeed, the locally compact coreflection
$KX$ of a Hausdorff topological space is $T$-dual so that $d_{T}X\leq KX$.
Hence if $X$ is a Hausdorff topological $k$-space, that is $X=TKX$,
(in particular a first-countable space) then $X\geq Td_{T}X$. We
will see in the next section that similarly, if $X$ is a consonant
topological space, then $T[X,\$]$ is core compact if and only if
$X$ is locally compact. 
\begin{problem}
\label{prob:noncorewithcoredual} Are there completely regular non
locally compact topological spaces $X$ such that $T[X,\$]$ is core
compact? 
\end{problem}
Of course, in view of the observations above, such a space cannot
be a $k$-space or consonant.

\section{Core compact dual, Consonance, and infraconsonance}

A topological space is \textit{consonant} \cite{DGL.kur} if every
Scott open subset $\A$ of $\O_{X}$ is compactly generated, that
is, there are compact subsets $(K_{i})_{i\in I}$ of $X$ such that
$\A=\union_{i\in I}\O(K_{i})$. A space is \textit{infraconsonant}
\cite{dolmyn.isbell} if for every Scott open subset $\A$ of $\O_{X}$
there is a Scott open set $\C$ such that $\C\vee\C\subseteq\A$,
where $\C\vee\C:=\{C\cap D:C,D\in\C\}$. The notion's importance stems
from Theorem \ref{th:infraequiv} below. If the set $C(X,Y)$ of continuous
functions from $X$ to $Y$ is equipped with the Isbell topology (%
\footnote{whose sub-basic open sets are given by \[
[\A,U]:=\left\{ f\in C(X,Y):\exists A\in\A,\: f(A)\subseteq U\right\} ,\]
where $\A$ ranges over openly isotone compact families on $X$ and
$U$ ranges over open subsets of $Y$.%
}), we denote it $C_{\kappa}(X,Y)$, while $C_{k}(X,Y)$ denotes $C(X,Y)$
endowed with the compact-open topology. Note that $C_{\kappa}(X,\$)=T[X,\$]$. 
\begin{thm}
\label{th:infraequiv} \cite{DJM.group} Let $X$ be a completely
regular topological space. The following are equivalent: 
\begin{enumerate}
\item $X$ is infraconsonant; 
\item addition is jointly continuous at the zero function in $C_{\kappa}(X,\r)$; 
\item $C_{\kappa}(X,\r)$ is a topological vector space; 
\item $\cap:T[X,\$]\times T[X,\$]\to T[X,\$]$ is jointly continuous. 
\end{enumerate}
\end{thm}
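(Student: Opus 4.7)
The strategy is to prove the cycle $(3) \Rightarrow (2) \Rightarrow (1) \Rightarrow (3)$ and the equivalence $(1) \Leftrightarrow (4)$; the first implication is trivial since joint continuity of addition is part of the definition of a topological vector space.

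For $(1) \Leftrightarrow (4)$, observe that any non-empty Scott open subset of $\O_X$ necessarily contains $X$ (being openly isotone). Thus joint continuity of $\cap$ at $(X,X)$ unfolds to: for every Scott open $\A$ containing $X$ there exist Scott opens $\C_1, \C_2$ containing $X$ with $C_1 \cap C_2 \in \A$ whenever $C_i \in \C_i$; setting $\C := \C_1 \cap \C_2$ (still Scott open and containing $X$) gives $\C \vee \C \subseteq \A$, i.e., infraconsonance. To promote continuity at $(X,X)$ to continuity at an arbitrary point $(U, V)$, use that $W \mapsto W \cap U \cap V$ is Scott continuous on $\O_X$ and apply infraconsonance to the Scott open pullback family $\A' := \{W \in \O_X : W \cap U \cap V \in \A\}$, which contains $X$ precisely when $U \cap V \in \A$.

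The easy half of $(1) \Rightarrow (3)$, namely joint continuity of addition at $0$ in $C_\kappa(X, \r)$, follows immediately from infraconsonance: given Scott open $\A$ and $\varepsilon > 0$, pick Scott open $\C$ with $\C \vee \C \subseteq \A$; if $f(C) \subseteq (-\varepsilon, \varepsilon)$ and $g(D) \subseteq (-\varepsilon, \varepsilon)$ with $C, D \in \C$, then $(f+g)(C \cap D) \subseteq (-2\varepsilon, 2\varepsilon)$ with $C \cap D \in \A$, so that $f + g \in [\A, (-2\varepsilon, 2\varepsilon)]$. The implication $(2) \Rightarrow (1)$ reverses this argument using complete regularity: given Scott open $\A$, joint continuity of $+$ at $0$ yields a basic Isbell neighborhood $[\B, W]$ of $0$ with $[\B, W] + [\B, W] \subseteq [\A, (-1,1)]$; for arbitrary $C, D \in \B$, Urysohn-type continuous functions $f, g$ that vanish on $C$ and $D$ respectively and are bounded below by $1$ on a prescribed point of $X \setminus (C \cap D)$ force the witnessing $A \in \A$ to lie inside $C \cap D$, hence $C \cap D \in \A$.

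The main obstacle is the remaining part of $(1) \Rightarrow (3)$: promoting joint continuity of addition at $0$ to the full topological vector space structure. For addition at an arbitrary point $(f_0, g_0)$ one must show that the Isbell topology is invariant under translation by a fixed continuous function; given $A_0 \in \A$ with $(f_0 + g_0)(A_0) \subseteq V$, the plan is to cover $A_0$ by finitely many open sets on which $g_0$ has oscillation less than $\varepsilon$ (by continuity of $g_0$ and compactness of $\A$ in the sense of a cover), and iterate infraconsonance to absorb the shift into the Scott open datum. Joint continuity of scalar multiplication at $(\lambda_0, f_0)$ reduces to analogous estimates, exploiting that $\A \mapsto \lambda\A$ preserves Scott openness for $\lambda \neq 0$, and treating $\lambda_0 = 0$ by a separate scaling estimate. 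Combining these continuities yields the topological vector space structure of $C_\kappa(X, \r)$.
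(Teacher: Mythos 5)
This theorem is quoted from \cite{DJM.group} and the paper gives no proof of it, so there is nothing internal to compare against; judging your proposal on its own merits, the easy pieces are right --- $(3)\Rightarrow(2)$ is trivial, $(4)\Rightarrow(1)$ does follow from continuity of $\cap$ at $(X,X)$ exactly as you say, and the computation $[\C,(-\e,\e)]+[\C,(-\e,\e)]\subseteq[\A,(-2\e,2\e)]$ correctly gives $(1)\Rightarrow(2)$ --- but each of the three remaining implications has a genuine gap. For $(1)\Rightarrow(4)$: applying infraconsonance to the pullback $\A'=\{W:W\cap U\cap V\in\A\}$ yields a Scott-open $\C$ with $X\in\C$ and $\C\vee\C\subseteq\A'$, but $\C$ is a neighborhood of $X$, whereas continuity at $(U,V)$ requires Scott-open families $\C_1\ni U$ and $\C_2\ni V$ with $\C_1\vee\C_2\subseteq\A$. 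The natural candidate $\C_1=\{W:\exists C\in\C,\ C\cap U\subseteq W\}$ is openly isotone, contains $U$, and satisfies the $\vee$-condition, but there is no reason it is a compact family (one cannot transfer compactness of $\C$ across the non-open set $X\setminus U$). What is actually needed is a pointed strengthening of infraconsonance --- a $\C$ with $\C\vee\C\subseteq\A$ containing a prescribed member of $\A$ --- and deriving that from the stated definition is real work, not a formality.

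For $(2)\Rightarrow(1)$, the separating functions you invoke cannot exist: a continuous $f$ with $f(C)\subseteq(-\e,\e)$ cannot be $\geq 1$ at a point of $\overline{C}\setminus C$, and such points typically lie in $X\setminus(C\cap D)$; so you cannot force the witnessing $A\in\A$ with $(f+g)(A)\subseteq(-1,1)$ to lie inside $C\cap D$, and the intended conclusion $\B\vee\B\subseteq\A$ is not what one can prove. The correct argument instead manufactures a \emph{different} $\C$ from level sets such as $\left\{x:|f(x)|<1/2\right\}$ for $f\in[\B,(-\e,\e)]$, using $f+f\in[\A,(-1,1)]$ to see that these sets belong to $\A$. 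Finally, the step you yourself flag as the main obstacle, $(1)\Rightarrow(3)$, is where essentially all of the content of \cite{DJM.group} lives, and the sketch does not survive scrutiny: compactness of the family $\A$ only converts an open cover of a member of $\A$ into a finite subfamily whose union is again in $\A$; it does not produce a finite subcover of the set $A_0$ itself, which need not be compact or even relatively compact, so the proposed decomposition of $A_0$ into finitely many pieces on which $g_0$ has small oscillation has no justification. As written, the proposal establishes $(3)\Rightarrow(2)$, $(1)\Rightarrow(2)$, and $(4)\Rightarrow(1)$, which falls well short of the four-way equivalence.
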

On the other hand, if $X$ is consonant then $C_{\kappa}(X,\r)=C_{k}(X,\r)$
so that consonance provides an obvious sufficient condition for $C_{\kappa}(X,\r)$
to be a topological vector space. Hence Theorem \ref{th:infraequiv}
becomes truly interesting if completely regular examples of infraconsonant
non consonant spaces can be provided \cite[Problem 1.2]{dolmyn.isbell}.
The first author recently obtained the first example of this kind.
The following results show that a space answering positively Problem
\ref{prob:noncorewithcoredual} would necessarily be infraconsonant
and non-consonant and might provide an avenue to construct new examples. 
\begin{thm}
If $X$ is topological and $T[X,\$]$ is core compact then $X$ is
infraconsonant. \end{thm}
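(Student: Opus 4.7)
My plan is to deduce infraconsonance from core compactness by an explicit construction using that $\kappa(X)=\O_{T[X,\$]}$ is a continuous lattice when $T[X,\$]$ is core compact.

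Fix a Scott-open $\A$ of $\O_X$; since $\A$ is nonempty and upward closed, $X\in\A$, so the infraconsonance condition for this $\A$ asks for a Scott-open $\C\ni X$ with $C_1\cap C_2\in\A$ for all $C_1,C_2\in\C$. By core compactness of $T[X,\$]$ at the point $X$ with neighbourhood $\A$, I first obtain a Scott-open $\B\ni X$ that is way below $\A$ in $\kappa(X)$, i.e.\ every Scott-open cover of $\A$ admits a finite subfamily that covers $\B$. I then consider the auxiliary map $\Phi\colon\O_X\to\kappa(X)$ defined by $\Phi(V):=\A^V=\{U\in\O_X:U\cap V\in\A\}$; the Scott-openness of $\A$ ensures that $\Phi$ preserves directed unions, so $\Phi$ is Scott-continuous.

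The crucial observation is that the approximation property $\A=\bigcup\{\G\in\kappa(X):\G\ll\A\}$ at every $\A\in\kappa(X)$ is equivalent to core compactness of $T[X,\$]$, so $\kappa(X)$ is a continuous lattice. The interpolation property of $\ll$ in continuous lattices then gives that $\uparrow\uparrow\B:=\{\F\in\kappa(X):\B\ll\F\}$ is Scott-open in $\kappa(X)$. Pulling this back through $\Phi$ and using $\Phi(X)=\A$ together with $\B\ll\A$, I obtain a Scott-open neighbourhood $\D:=\{V\in\O_X:\B\ll\A^V\}$ of $X$ with the key property that $\B\subseteq\A^V$ for every $V\in\D$, equivalently $U\cap V\in\A$ whenever $U\in\B$ and $V\in\D$.

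Finally, $\C:=\B\cap\D$ is a Scott-open neighbourhood of $X$ and serves as the required witness: for any $C_1,C_2\in\C$ we have $C_1\in\B$ and $C_2\in\D$, hence $C_1\cap C_2\in\A$; thus $\C\vee\C\subseteq\A$ and $X$ is infraconsonant. The only non-routine ingredient is the identification of core compactness of $T[X,\$]$ with continuity of the lattice $\kappa(X)$, together with the ensuing Scott-openness of $\uparrow\uparrow\B$; these are standard consequences of the way-below relation in continuous lattices, and I expect them to be the only delicate point in a clean writeup.
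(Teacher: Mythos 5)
Your proof is correct, but it follows a genuinely different route from the paper's. The paper does not construct the witness $\C$ at all: it invokes the equivalence (from \cite{dolmyn.isbell}, whose relevant implication needs no separation) of infraconsonance with joint continuity of $\cap\colon T[X,\$]\times T[X,\$]\to T[X,\$]$, and derives that continuity functorially --- $X$ topological makes $[X,\$]$ $T$-dual, core compactness makes $T[X,\$]$ $T$-dual, and chaining the two commutation inequalities gives $T([X,\$]\times[X,\$])\leq T[X,\$]\times T[X,\$]$, through which the continuity of $\cap$ on $[X,\$]^{2}$ passes. You instead build $\C$ explicitly: core compactness of $T[X,\$]$ is precisely continuity of the lattice $\kappa(X)=\O_{T[X,\$]}$, which legitimizes both the choice of $\B\ni X$ with $\B\ll\A$ and, via interpolation, the Scott openness of $\left\{ \F\in\kappa(X):\B\ll\F\right\} $; pulling the latter back along the Scott-continuous map $V\mapsto\A^{V}$ yields a Scott-open $\D\ni X$ with $U\cap V\in\A$ for all $U\in\B$, $V\in\D$, and $\C=\B\cap\D$ finishes. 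All the steps check out: $\A^{V}\in\kappa(X)$ and the preservation of directed unions by $V\mapsto\A^{V}$ both follow from Scott openness of $\A$, $\Phi(X)=\A$ gives $X\in\D$, and $\B\ll\A^{V}\then\B\subseteq\A^{V}$ gives the key inclusion. What your argument buys is self-containedness modulo standard continuous-lattice theory, with no appeal to the external characterization of infraconsonance; it also makes visible that the real content is the existence of a product neighbourhood $\B\times\D$ of $(X,X)$ inside $\S_{\A}$, which is exactly the mechanism of the paper's Theorem \ref{th:prodscottinfraC} and Lemma \ref{lem:inter}. What the paper's argument buys is brevity and a uniform functorial viewpoint that recycles Proposition \ref{pro:dualcore}.
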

\begin{proof}
\cite[Lemma 3.3]{dolmyn.isbell} shows the equivalence between (1)
and (4) in Theorem \ref{th:infraequiv}, and that the implication
(4)$\then$(1) does not require any separation. Therefore, it is enough
to show that $\cap:T[X,\$]\times T[X,\$]\to T[X,\$]$ is continuous.
Since $X$ is topological, $[X,\$]$ is $T$-dual by Proposition \ref{pro:dualcore}.
In view of \eqref{eq:partcommute} \[
T([X,\$]\times[X,\$])\leq[X,\$]\times T[X,\$]\]
 so that $T([X,\$]\times[X,\$])\leq T([X,\$]\times T[X,\$])$. If
$T[X,\$]$ is core compact, hence $T$-dual then $T([X,\$]\times T[X,\$])\leq T[X,\$]\times T[X,\$]$
so that \begin{equation}
T([X,\$]\times[X,\$])\leq T[X,\$]\times T[X,\$].\label{eq:Tfullcomm}\end{equation}
 Therefore the continuity of $\cap:[X,\$]\times[X,\$]\to[X,\$]$ implies
that of $\cap:T([X,\$]\times[X,\$])\to T[X,\$]$ because $T$ is a
functor, and in view of \eqref{eq:Tfullcomm}, that of $\cap:T[X,\$]\times T[X,\$]\to T[X,\$]$. \end{proof}
\begin{thm}
\label{th:Okcore} Let $X$ be a topological space. If $C_{k}(X,\$)$
is core compact then $X$ is locally compact. \end{thm}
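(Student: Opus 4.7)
The plan is to invoke core compactness of $C_{k}(X,\$)$ at the basic open $\O(\{x_0\})$ containing $V_{0}$ to produce a compact $K\subseteq V_{0}$ whose saturation $\uparrow K:=\bigcap\{U\in\O_{X}:K\subseteq U\}$ will be the sought compact neighborhood of $x_{0}$.

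Fix $x_{0}\in X$ and an open neighborhood $V_{0}$ of $x_{0}$. Since $\{x_{0}\}$ is compact, $\O(\{x_{0}\})$ is a subbasic open of $C_{k}(X,\$)$ and contains $V_{0}$. Core compactness at $V_{0}\in\O(\{x_{0}\})$ yields an open $\mathcal{V}$ of $C_{k}(X,\$)$ with $V_{0}\in\mathcal{V}$ and $\mathcal{V}\ll\O(\{x_{0}\})$, and since $\mathcal{V}$ is an open neighborhood of $V_{0}$ it contains a subbasic neighborhood $\O(K)\subseteq\mathcal{V}$ for some compact $K\subseteq V_{0}$. The easy properties of $\uparrow K$ are immediate: $\uparrow K\subseteq V_{0}$ (the open $V_{0}$ appears in the intersection); $\uparrow K$ is compact (any open cover of $\uparrow K$ covers $K$, and a finite subcover of $K$ is a union of opens containing $K$, hence contains $\uparrow K$); and $x_{0}\in\uparrow K$ since $\O(K)\subseteq\mathcal{V}\subseteq\O(\{x_{0}\})$ says every open containing $K$ contains $x_{0}$. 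The remaining, and main, point is to show $x_{0}\in\mathrm{int}(\uparrow K)$.

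I argue this by contradiction. If $x_{0}\notin\mathrm{int}(\uparrow K)$, every open $V\ni x_{0}$ meets $X\setminus\uparrow K$, so the family of subbasic opens
\[
\mathcal{F}:=\bigl\{\O(\{x_{0}\}\cup F):F\subseteq X\setminus\uparrow K\text{ finite nonempty}\bigr\}
\]
covers $\O(\{x_{0}\})$: for $V\in\O(\{x_{0}\})$ pick $y\in V\cap(X\setminus\uparrow K)$, and then $V\in\O(\{x_{0},y\})\in\mathcal{F}$. Applying $\mathcal{V}\ll\O(\{x_{0}\})$, a finite subfamily suffices: there exist finite nonempty $F_{1},\dots,F_{n}\subseteq X\setminus\uparrow K$ with $\bigcup_{i}\O(\{x_{0}\}\cup F_{i})\supseteq\mathcal{V}\supseteq\O(K)$, i.e.\ every open $U\supseteq K$ contains some $F_{i}$. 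If no $F_{i}$ lay inside $\uparrow K$, one could pick $f_{i}\in F_{i}\setminus\uparrow K$ together with an open $U_{i}\supseteq K$ with $f_{i}\notin U_{i}$; then $U:=\bigcap_{i}U_{i}$ would be an open set with $K\subseteq U$ and $F_{i}\not\subseteq U$ for every $i$, contradicting the covering. Hence some $F_{i}\subseteq\uparrow K$, which forces $F_{i}\subseteq\uparrow K\cap(X\setminus\uparrow K)=\emptyset$, contradicting $F_{i}$ nonempty. So $x_{0}\in\mathrm{int}(\uparrow K)$, and $\uparrow K$ is the desired compact neighborhood.

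The main obstacle, and the crux of the proof, is designing the cover $\mathcal{F}$ so that the way-below relation in $\O(C_{k}(X,\$))$ can be turned into useful combinatorial information. The pigeonhole step via a finite intersection of open witnesses is what forces the contradiction and distinguishes this argument from a naive one. Notably, no separation axiom on $X$ is used, which is precisely why the saturation $\uparrow K$—rather than $K$ itself—appears as the compact neighborhood.
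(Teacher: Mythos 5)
Your proof is correct, and its combinatorial engine is the same one the paper uses: apply core compactness of $C_{k}(X,\$)$ at a point of the open set $\O(\{x_{0}\})$, cover $\O(\{x_{0}\})$ by sets of the form $\O(S)$ with $S$ finite, and defeat any finite subfamily by intersecting finitely many open witnesses $U_{i}\supseteq K$ that each omit one selected point. The packaging, however, differs in ways worth recording. The paper argues contrapositively and outsources its starting point to the cited fact that $C_{k}(X,\$)\ngeq[X,\$]$ when $X$ is not locally compact, which is what produces the pair $(U_{0},x_{0})$ with $x_{0}\notin\intr\left(\bigcap_{V\in\O(K)}V\right)$ for every compact $K\subseteq U_{0}$; your argument is direct and self-contained, requiring no such external characterization. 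You also make the saturation $\uparrow K=\bigcap\O(K)$ explicit and check its compactness, so you obtain the stronger conclusion that every point has a neighborhood base of compact saturated sets, not merely one compact neighborhood (which is all the paper's convergence-theoretic definition of local compactness demands), and you rightly note that no separation axiom enters. One stylistic nit: your last step is phrased as a double negation (``if no $F_{i}$ lay inside $\uparrow K$ \dots\ hence some $F_{i}\subseteq\uparrow K$''); since each $F_{i}$ is nonempty and entirely disjoint from $\uparrow K$ by construction, you can simply pick $f_{i}\in F_{i}$ and $U_{i}\in\O(K)$ with $f_{i}\notin U_{i}$ and observe that $U=\bigcap_{i}U_{i}\in\O(K)$ contains no $F_{i}$, directly contradicting the finite subcover.
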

\begin{proof}
If $X$ is not locally compact, then $C_{k}(X,\$)\ngeq[X,\$]$ (e.g.,
\cite[2.19]{schwarz.powers}) so that there is $U_{0}\in\O_{X}$ with
$U_{0}\notin\lm_{[X,\$]}\N_{k}(U_{0})$. Therefore, there is $x_{0}\in U_{0}$
such that $x_{0}\notin\intr\left(\bigcap_{V\in\O(K)}V\right)$ whenever
$K$ is a compact subset of $X$ with $K\subseteq U_{0}$. In other
words, for each such $K$ and for each $U\in\O(x_{0})$ there is $V_{U}\in\O(K)$
and $x_{U}\in U\setminus V_{U}$. Then $C_{k}(X,\$)$ is not core
compact at $U_{0}$. Indeed, there is $U_{0}\in\O(x_{0})$ such that
for every compact set $K$ with $K\subseteq U_{0}$, the $k$-open
set $\O(K)$ is not relatively compact in $\O(x_{0})$. To see that,
consider the cover $\S:=\{\O(x_{U}):U\in\O(x_{0})\}$ of $\O(x_{0})$.
No finite subfamily of $\S$ covers $\O(K)$ because for any finite
choice of $U_{1},\dots,U_{n}$ in $\O(x_{0})$, we have $W:=\cap_{i=1}^{i=n}V_{U_{i}}\in\O(K)$
but $W\notin\cup_{i=1}^{i=n}\O(x_{U_{i}})$. 
\end{proof}
Note that a Hausdorff topological space $X$ is locally compact if
and only if it is core compact, and that the Scott open filter topology
on $\O(X)$ then coincides with $C_{k}(X,\$)$ (e.g., \cite[Lemma II.1.19]{contlattices}).
Hence Theorem \ref{th:Okcore} could also be deduced (for the Hausdorff
case) from \cite[Corollary 3.6]{corecompactdual}. 
\begin{cor}
If $X$ is a consonant topological space such that $T[X,\$]$ is core
compact, then $X$ is locally compact. 
\end{cor}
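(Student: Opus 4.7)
The plan is to deduce the corollary immediately from Theorem \ref{th:Okcore} after identifying $T[X,\$]$ with $C_k(X,\$)$ under the consonance hypothesis. This is essentially a one-line proof, so the work lies in making the identification explicit rather than in any serious argument.

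First, I would unpack the definition of consonance given in the paper: every Scott open $\A\subseteq\O_X$ has the form $\bigcup_{i\in I}\O(K_i)$ for compact sets $K_i$. This is equivalent to saying that the family $\{\O(K):K\text{ compact in }X\}$ is a base for the Scott topology on $\O_X$, since each $\O(K)$ is itself Scott open (it is openly isotone and compact as a family). Under the identification of $\O_X$ with $C(X,\$)$ via indicator functions, the sets $\O(K)=\{U\in\O_X:K\subseteq U\}$ are precisely the subbasic compact-open sets of the form $[K,\{1\}]$, so the compact-open topology on $C(X,\$)$ has the same base. Hence consonance of $X$ is equivalent to $T[X,\$]=C_k(X,\$)$.

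With this in hand, the hypothesis that $T[X,\$]$ is core compact translates directly into core-compactness of $C_k(X,\$)$, and Theorem \ref{th:Okcore} then yields local compactness of $X$.

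The main (and only) point requiring a word of justification is the identification $T[X,\$]=C_k(X,\$)$ for consonant $X$; this is standard, and in fact is implicitly used in the paragraph preceding Theorem \ref{th:Okcore} where the paper observes that consonance implies $C_{\kappa}(X,\r)=C_k(X,\r)$. No other obstacle arises.
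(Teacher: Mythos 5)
Your proof is correct and is exactly the argument the paper intends: consonance gives $T[X,\$]=C_{\kappa}(X,\$)=C_k(X,\$)$ (since each $\O(K)$ is Scott open and consonance makes these a base), and Theorem \ref{th:Okcore} then yields local compactness. The identification you spell out is the only content of the corollary, and you have justified it adequately.
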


\section{Scott topology of the product versus product of Scott topologies}

We now turn to a new characterization of infraconsonance, which motivates
further the systematic investigation of the notion. 
\begin{prop}
\label{pro:Scottprod} $T([X,\$]^{2})$ is the Scott topology on $\O_{X}\times\O_{X}$. \end{prop}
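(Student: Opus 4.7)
The plan is to reduce the statement to the identification, recalled in the introduction, of $T[Y,\$]$ with the Scott topology on $\O_{Y}$ for any topological space $Y$. The reduction goes through the topological sum $X\sqcup X$ of two copies of $X$.

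First, I would observe that in the cartesian-closed category \textbf{Conv}, the exponential $[-,\$]$ is a right adjoint and hence sends coproducts to products. Concretely, the canonical bijection
\[
C(X\sqcup X,\$)\longleftrightarrow C(X,\$)\times C(X,\$),\qquad f\longmapsto (f|_{X_{1}},f|_{X_{2}}),
\]
underlies an isomorphism of convergence spaces $[X\sqcup X,\$]\cong[X,\$]\times[X,\$]=[X,\$]^{2}$. If desired this can be verified directly by unpacking the continuous convergence, using only that a filter on $X\sqcup X$ converges to a point iff it eventually sits in the summand containing that point and its trace there converges in $X$.

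Second, since $X$ is topological, so is $X\sqcup X$, and the assignment $W\mapsto(W\cap X_{1},W\cap X_{2})$ is an order isomorphism of complete lattices between $\O_{X\sqcup X}$ (ordered by inclusion) and $\O_{X}\times\O_{X}$ (with the product order). Since the Scott topology of a complete lattice is determined purely by the order, this isomorphism transports the Scott topology of $\O_{X\sqcup X}$ onto the Scott topology of $\O_{X}\times\O_{X}$. Applying the identification $T[Y,\$]=$ Scott topology on $\O_{Y}$ to $Y=X\sqcup X$, one obtains
\[
T([X,\$]^{2})\;=\;T[X\sqcup X,\$]\;=\;\text{Scott topology on }\O_{X}\times\O_{X},
\]
as required.

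The only step requiring any real verification is the convergence-space isomorphism $[X\sqcup X,\$]\cong[X,\$]^{2}$; the rest is a matter of definitions together with the introductory material. I do not expect this to present a genuine obstacle, since the argument is essentially the standard observation that $[-,\$]$ converts disjoint sums into products.
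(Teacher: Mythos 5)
Your argument is correct, but it takes a genuinely different route from the paper's. The paper proves the proposition directly: Lemma \ref{lem:coordcompact} characterizes the $[X,\$]^{2}$-open subsets of $\O_{X}\times\O_{X}$ as the upper sets that are ``coordinatewise compact,'' and the proof of the proposition then checks that this class coincides with the Scott-open subsets of the product lattice, the only nontrivial point being that a Scott-open set is coordinatewise compact, which follows by applying Scott-openness to the directed family of pairs of finite subunions. You instead reduce to the one-variable identification $T[Y,\$]=$ Scott topology on $\O_{Y}$ by writing $[X,\$]^{2}\cong[X\sqcup X,\$]$ and $\O_{X}\times\O_{X}\cong\O_{X\sqcup X}$; both identifications are sound --- the first is the exponential law for coproducts in the cartesian closed category \textbf{Conv} (note that the coproduct in \textbf{Conv} of two topological spaces is their topological sum, so the introduction's identification applies to $Y=X\sqcup X$), and the second is an order isomorphism of complete lattices, so it transports Scott topologies. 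Your approach is shorter, avoids any filter computation on $\O_{X}\times\O_{X}$, and generalizes at once to $[X,\$]\times[Y,\$]\cong[X\sqcup Y,\$]$ and to arbitrary powers. What it does not produce is the explicit description of the $[X,\$]^{2}$-open sets, which is the real content of Lemma \ref{lem:coordcompact} and is what the paper actually needs afterwards (Lemmas \ref{lem:inter} and \ref{lem:down} and the proof of Theorem \ref{th:prodscottinfraC}), so in the paper's economy the direct route earns its keep. If you write your version up, the one step to spell out is the convergence isomorphism $[X\sqcup X,\$]\cong[X,\$]^{2}$: either quote the categorical exponential law, or verify directly that a filter on the sum converges to a point iff it contains the summand of that point and its trace there converges, whence $f\in\lm_{[X\sqcup X,\$]}\F$ iff each restriction $f|_{X_{i}}$ is a limit in $[X,\$]$ of the image of $\F$ under restriction.
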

\begin{thm}
\label{th:prodscottinfraC} A space $X$ is infraconsonant if and
only if the product $(T[X,\$])^{2}$ of the Scott topologies on $\O_{X}$
and the Scott topology $T([X,\$]^{2})$ on the product $\O_{X}\times\O_{X}$
coincide at $(X,X)$. \end{thm}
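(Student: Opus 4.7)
The plan is to establish the two implications separately, using Proposition \ref{pro:Scottprod} to identify $T([X,\$]^2)$ with the Scott topology on the product poset $\O_X\times\O_X$. Since the product of two Scott topologies is always coarser than the Scott topology of the product, the nontrivial content of the coincidence at $(X,X)$ is that every Scott-open neighborhood of $(X,X)$ contains a product of two Scott-open neighborhoods of $X$.

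For the forward direction, I would assume $X$ is infraconsonant and pick a Scott-open $\W\subseteq \O_X\times\O_X$ containing $(X,X)$. The natural move is to pull $\W$ back along the diagonal by setting $\A:=\{U\in\O_X:(U,U)\in\W\}$. A routine check shows $\A$ is Scott-open in $\O_X$: it is an upper set because $\W$ is, and if $\bigcup_i U_i\in\A$ for a directed family, then the directed family $((U_i,U_i))_i$ has sup in $\W$, so some $(U_i,U_i)\in\W$. Infraconsonance applied to $\A$ yields a Scott-open $\C$ with $X\in\C$ and $\C\vee\C\subseteq\A$. The key observation is then that for any $C,D\in\C$ the pair $(C,D)$ dominates $(C\cap D,C\cap D)\in\W$ componentwise, and upper-closedness of $\W$ forces $(C,D)\in\W$. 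Hence $\C\times\C\subseteq\W$, which is the desired product neighborhood of $(X,X)$ inside $\W$.

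For the converse, I would exploit the fact that the binary intersection $\cap:\O_X\times\O_X\to\O_X$ preserves directed suprema: since $\O_X$ is a frame, meets distribute over directed unions in each variable, and a directedness argument promotes this to joint preservation. Thus $\cap$ is Scott-continuous from $T([X,\$]^2)$ to $T[X,\$]$. Given a Scott-open $\A$ with $X\in\A$, the preimage $\W:=\cap^{-1}(\A)$ is then Scott-open in the product and contains $(X,X)$, so by the coincidence hypothesis it contains a product $\C_1\times\C_2$ of Scott-open neighborhoods of $X$. Setting $\C:=\C_1\cap\C_2$ produces a single Scott-open neighborhood of $X$ with $\C\vee\C\subseteq\A$, establishing infraconsonance.

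The only subtle step is the diagonal upward-closure trick in the forward direction: it is exactly the upper-set property of Scott-open sets in the product that allows one to promote the diagonal information $(C\cap D,C\cap D)\in\W$ to full off-diagonal information $(C,D)\in\W$. Once that is noticed, the rest is a direct verification from the Scott-continuity of intersection and the definition of the product topology, and no separation axioms are needed.
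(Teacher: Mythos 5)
Your proof is correct and follows essentially the same route as the paper's: your diagonal pullback $\{U:(U,U)\in\W\}$ is exactly the paper's family $\downarrow\S$ of Lemma \ref{lem:down} (since $W\in\downarrow\S$ iff $(W,W)\in\S$), your preimage $\cap^{-1}(\A)$ is the paper's $\S_{\A}$ of Lemma \ref{lem:inter}, and the upper-set promotion from $(C\cap D,C\cap D)\in\W$ to $(C,D)\in\W$ appears verbatim in both arguments. The only difference is cosmetic: you package the openness checks as Scott-continuity of $\cap$ and of the diagonal restriction, where the paper verifies coordinatewise compactness directly via Lemma \ref{lem:coordcompact}.
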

\begin{lem}
\label{lem:coordcompact} A subset $\S$ of $\O_{X}\times\O_{X}$
is $[X,\$]^{2}$-open if and only if 
\begin{enumerate}
\item $\S=\S^{\uparrow}$, that is, if $(U,V)\in\S$ and $U\subseteq U'$,
$V\subseteq V'$ then $(U',V')\in\S$; 
\item $\S$ is coordinatewise compact, that is, \[
\left(\union_{i\in I}O_{i},\union_{j\in J}V_{j}\right)\in\S\then\exists I_{0}\in[I]^{<\omega},J_{0}\in[J]^{<\omega}:\left(\union_{i\in I_{0}}O_{i},\union_{j\in J_{0}}V_{j}\right)\in\S\]
 
\end{enumerate}
\end{lem}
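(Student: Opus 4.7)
The plan is to prove each direction separately, using two elementary ingredients: (a) the characterization \eqref{eq:basedual} of $[X,\$]$-convergence in terms of ideal-subbase filters $\O^{\natural}(\P)$, and (b) the observation that $U\inc U'$ implies $U\in\lm_{[X,\$]}\{U'\}^{\uparrow}$, which holds because $\{U'\}$ covers $U$: every filter on $X$ converging to a point of $U\inc U'$ contains the open set $U'$.

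For the ``only if'' direction, suppose $\S$ is $[X,\$]^2$-open. Condition (1) follows from (b) applied coordinatewise: if $(U,V)\in\S$ with $U\inc U'$ and $V\inc V'$, then $(U,V)\in\lm_{[X,\$]^2}\{(U',V')\}^{\uparrow}$, so openness forces $\S\in\{(U',V')\}^{\uparrow}$, i.e.\ $(U',V')\in\S$. For condition (2), given $(\union_{i\in I} O_i,\union_{j\in J} V_j)\in\S$, I would form the ideal subbases
\[
\P:=\left\{\union_{i\in I_0} O_i:I_0\in[I]^{<\omega}\right\},\qquad \Q:=\left\{\union_{j\in J_0} V_j:J_0\in[J]^{<\omega}\right\},
\]
and verify that $\P$ covers $\union_i O_i$ (for $x\in\union_i O_i$ some $O_{i_0}\in\P$ contains $x$, hence lies in every filter converging to $x$), and likewise for $\Q$. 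Thus $(\union_i O_i,\union_j V_j)\in\lm_{[X,\$]^2}\bigl(\O^{\natural}(\P)\times\O^{\natural}(\Q)\bigr)$, and openness of $\S$ produces a basic element $\O(P)\times\O(Q)\inc\S$ for some $P=\union_{I_0}O_i$ and $Q=\union_{J_0}V_j$, yielding the required witness.

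For the ``if'' direction, assume (1) and (2) and let $(U,V)\in\S\cap\lm_{[X,\$]^2}\H$. By \eqref{eq:basedual} applied to each coordinate projection, there are ideal subbases $\P,\Q$ of open subsets covering $U,V$ respectively, with $\O^{\natural}(\P)\times\O^{\natural}(\Q)\leq\H$. I would then chain the hypotheses: (1) gives $(\union\P,\union\Q)\in\S$ since $U\inc\union\P$ and $V\inc\union\Q$; (2) supplies finite $\P_0\inc\P$, $\Q_0\inc\Q$ with $(\union\P_0,\union\Q_0)\in\S$; the ideal-subbase property of $\P$ and $\Q$ produces single $P\in\P$, $Q\in\Q$ dominating these finite unions, whence (1) once more gives $(P,Q)\in\S$ and, by upward closure, $\O(P)\times\O(Q)\inc\S$. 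Therefore $\S\in\O^{\natural}(\P)\times\O^{\natural}(\Q)\leq\H$.

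The main obstacle is the monotonicity/ideal-subbase bookkeeping in the ``if'' direction: four successive appeals---(1), (2), the ideal-subbase closure property, and (1) again---must be composed correctly to pass from the single point $(U,V)\in\S$ to a full basic neighborhood $\O(P)\times\O(Q)$ lying inside $\S$. A minor side issue is checking that \eqref{eq:basedual} remains available in the generality of the lemma (the paper records it for topological $X$, but the argument in \cite{DM.transfer} works for arbitrary convergence spaces).
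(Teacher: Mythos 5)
Your proposal is correct and follows essentially the same route as the paper's proof: the forward direction is identical (the filter the paper builds from $\{\O(\union_{i\in F}O_{i}):F\in[I]^{<\omega}\}$ is exactly your $\O^{\natural}(\P)$), and the backward direction differs only cosmetically, in that you invoke the ideal-subbase base \eqref{eq:basedual} where the paper applies the Scott-convergence formula \eqref{eq:scottconvergence} directly to the coordinate projections of the filter before using conditions (1) and (2) in the same pattern to land a basic rectangle inside $\S$. Your closing caveat about the generality of \eqref{eq:basedual} is moot here, since the lemma is used with $X$ topological, which is also the setting in which the paper's own argument via \eqref{eq:scottconvergence} operates.
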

\begin{proof}
Assume $\S$ is $[X,\$]^{2}$-open and let $(U,V)\in\S$ and $U\subseteq U'$,
$V\subseteq V'$. Then $(U,V)\in\lm_{[X,\$]^{2}}\{(U',V')\}^{\uparrow}$
so that $(U',V')\in\S$. Assume now that $\left(\union_{i\in I}O_{i},\union_{j\in J}V_{j}\right)\in\S$.
Then $\{\O\left(\union_{i\in F}O_{i}\right):F\in[I]^{<\infty}\}$
is a filter-base for a filter $\gamma$ on $\O_{X}$ such that $\union_{i\in I}O_{i}\in\lm_{[X,\$]}\gamma$
and $\{\O\left(\union_{j\in D}V_{j}\right):D\in[J]^{<\infty}\}$ is
a filter-base for a filter $\eta$ on $\O_{X}$ such that $\union_{j\in J}V_{j}\in\lm_{[X,\$]}\eta$.
Hence $\S\in\gamma\times\eta$ because $\S$ is $[X,\$]^{2}$-open.
Therefore, there are finite subsets $I_{0}$ of $I$ and $J_{0}$
of $J$ such that $\O\left(\union_{i\in I_{0}}O_{i}\right)\times\O\left(\union_{j\in J_{0}}V_{j}\right)\subseteq\S$,
so that $\left(\union_{i\in I_{0}}O_{i},\union_{j\in J_{0}}V_{j}\right)\in\S$.

Conversely, assume that $\S$ satisfies the two conditions of the
Lemma and $(U,V)\in\S\cap\lm_{[X,\$]^{2}}(\gamma\times\eta)$. Since
$U\subseteq\union_{\G\in\gamma}\intr\left(\bigcap_{G\in\G}G\right)$
and $V\subseteq\union_{\H\in\eta}\intr\left(\bigcap_{H\in\H}H\right)$,
we have, by the first condition, that $\left(\union_{\G\in\gamma}\intr\left(\bigcap_{G\in\G}G\right),\union_{\H\in\eta}\intr\left(\bigcap_{H\in\H}H\right)\right)\in\S$.
By the second condition, there are $\G_{1},\ldots,\G_{k}\in\gamma$
and $\H_{1},\ldots,\H_{n}\in\eta$ such that \[
\left(\union_{i=1}^{k}\intr\left(\bigcap_{G\in\G_{i}}G\right),\union_{j=1}^{n}\intr\left(\bigcap_{H\in\H_{j}}H\right)\right)\in\S.\]
 Therefore $\left(\intr\left(\bigcap_{G\in\bigcap_{i=1}^{k}\G_{i}}G\right),\intr\left(\bigcap_{H\in\bigcap_{j=1}^{n}\H_{j}}H\right)\right)\in\S$
so that \[
\left(\bigcap_{i=1}^{k}\G_{i},\bigcap_{j=1}^{n}\H_{j}\right)\subseteq\S,\]
 and $\S\in\gamma\times\eta$. 
\end{proof}

\begin{proof}[Proof of Proposition \ref{pro:Scottprod}]
 In view of Lemma \ref{lem:coordcompact}, every $[X,\$]^{2}$-open
subset of $\O_{X}\times\O_{X}$ is Scott open. Conversely, consider
a Scott open subset $\S$ of $\O_{X}\times\O_{X}$. We only have to
check that $\S$ statisfies the second condition in Lemma \ref{lem:coordcompact}.
Let $(\union_{i\in I}O_{i},\union_{j\in J}V_{j})\in\S$. The set $D:=\lbrace\left(\union_{i\in I_{0}}O_{i},\union_{j\in J_{0}}V_{j}\right):I_{0}\in[I]^{<\omega},J_{0}\in[J]^{<\omega}\rbrace$
is a directed subset of $\O_{X}\times\O_{X}$ (for the coordinatewise
inclusion order) whose supremum is $(\union_{i\in I}O_{i},\union_{j\in J}V_{j})$.
As $\S$ is Scott-open, there are finite subsets $I_{0}$ of $I$
and $J_{0}$ of $J$ such that $\left(\union_{i\in I_{0}}O_{i},\union_{j\in J_{0}}V_{j}\right)\in\S$. \end{proof}
\begin{lem}
\label{lem:inter} If $\A\in\kappa(X)$ then $\S_{\A}:=\{(U,V)\in\O_{X}\times\O_{X}:U\cap V\in\A\}^{\uparrow}$
is $[X,\$]^{2}$-open. \end{lem}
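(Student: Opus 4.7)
The plan is to verify the two conditions of Lemma \ref{lem:coordcompact}, which characterize $[X,\$]^2$-open subsets of $\O_X\times\O_X$. The first condition, that $\S_\A=\S_\A^\uparrow$, is immediate from the definition of $\S_\A$ as an upward closure. So all the real work goes into the coordinatewise compactness condition.

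For this, I would take $(U,V)\in\S_\A$ of the form $U=\bigcup_{i\in I}O_i$ and $V=\bigcup_{j\in J}V_j$. By the upward closure of $\S_\A$, I may assume $U\cap V\in\A$. The key algebraic observation is the distributive identity
\[
\Bigl(\bigcup_{i\in I}O_i\Bigr)\cap\Bigl(\bigcup_{j\in J}V_j\Bigr)\;=\;\bigcup_{(i,j)\in I\times J}(O_i\cap V_j),
\]
so that $\bigcup_{(i,j)\in I\times J}(O_i\cap V_j)\in\A$. Since $\A\in\kappa(X)$ is a compact family, there exists a finite subset $F\subseteq I\times J$ with $\bigcup_{(i,j)\in F}(O_i\cap V_j)\in\A$.

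Now I would let $I_0$ and $J_0$ be the (finite) projections of $F$ onto $I$ and $J$ respectively, and observe that
\[
\bigcup_{(i,j)\in F}(O_i\cap V_j)\;\subseteq\;\Bigl(\bigcup_{i\in I_0}O_i\Bigr)\cap\Bigl(\bigcup_{j\in J_0}V_j\Bigr).
\]
Since $\A$ is openly isotone (upward closed in $\O_X$), the set on the right lies in $\A$, so $\bigl(\bigcup_{i\in I_0}O_i,\bigcup_{j\in J_0}V_j\bigr)\in\S_\A$, which is exactly condition (2) of Lemma \ref{lem:coordcompact}.

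There is no real obstacle here; the argument is a direct combination of the compactness of $\A$, the distributivity of $\cap$ over $\cup$, and the upward closure of $\A$. The only thing to be slightly careful about is extracting \emph{finite sets on each coordinate separately} rather than a single finite set of pairs, which is where the projection step comes in, together with the isotonicity of $\A$ to absorb the resulting enlargement.
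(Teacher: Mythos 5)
Your proof is correct and follows essentially the same route as the paper's: distribute the intersection over the unions, apply compactness of $\A$, and invoke Lemma \ref{lem:coordcompact}. Your explicit projection-and-isotonicity step is a slightly more careful rendering of what the paper does implicitly when it extracts $I_{0}\times J_{0}$ directly.
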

\begin{proof}
Let $(\union_{i\in I}O_{i},\union_{j\in J}V_{j})\in\S_{\A}$. Then
\[
(\union_{i\in I}O_{i})\cap(\union_{j\in J}V_{j})=\union_{(i,j)\in I\times J}O_{i}\cap V_{j}\in\A.\]
 By compactness of $\A$, there is a finite subset $I_{0}$ of $I$
and a finite subset $J_{0}$ of $J$ such that $\union_{(i,j)\in I_{0}\times J_{0}}O_{i}\cap V_{j}\in\A$,
so that $(\union_{i\in I_{0}}O_{i},\union_{j\in J_{0}}V_{j})\in\S_{\A}$.
In view of Lemma \ref{lem:coordcompact}, $\S_{\A}$ is $[X,\$]^{2}$-open. \end{proof}
\begin{lem}
\label{lem:down} If $\S$ is $[X,\$]^{2}$-open, then \[
\downarrow\S:=\O_{X}(\{U\cup V:(U,V)\in\S\})\]
 is a compact family on $X$. \end{lem}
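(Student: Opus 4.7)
The plan is to verify directly that $\downarrow\S$ satisfies the two defining properties of a compact family on $X$: it is openly isotone, and whenever an open union $\bigcup_{i\in I}O_{i}$ belongs to $\downarrow\S$, some finite subunion already does.

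Open isotonicity is immediate from the very definition $\downarrow\S=\O_{X}(\{U\cup V:(U,V)\in\S\})$: an open set lies in $\downarrow\S$ precisely when it contains some $U\cup V$ with $(U,V)\in\S$, and this containment is preserved by passing to larger open supersets.

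For the compactness property, I would leverage Lemma \ref{lem:coordcompact}, which characterizes $[X,\$]^{2}$-openness of $\S$ by upward-closedness together with coordinatewise compactness. Suppose $W:=\bigcup_{i\in I}O_{i}\in\downarrow\S$, so there exists $(U,V)\in\S$ with $U\cup V\subseteq W$. By condition (1) of Lemma \ref{lem:coordcompact}, the pair $(W,W)$ belongs to $\S$. Writing $W=\bigcup_{i\in I}O_{i}$ in both coordinates and applying condition (2) of Lemma \ref{lem:coordcompact}, I obtain finite subsets $I_{0},J_{0}\subseteq I$ such that $\left(\bigcup_{i\in I_{0}}O_{i},\bigcup_{j\in J_{0}}O_{j}\right)\in\S$. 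Setting $K:=I_{0}\cup J_{0}$, which is finite, upward-closure yields $\left(\bigcup_{k\in K}O_{k},\bigcup_{k\in K}O_{k}\right)\in\S$, and hence $\bigcup_{k\in K}O_{k}\in\downarrow\S$ because this set is the union of the two coordinates.

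The argument is essentially bookkeeping once Lemma \ref{lem:coordcompact} is in hand, and no step should present a real obstacle. The only trick worth isolating is the reduction of two-coordinate compactness to a single-coordinate conclusion by re-indexing the same union $W$ on both sides and merging the two resulting finite index sets into $K=I_{0}\cup J_{0}$; this is legitimate precisely because of the upward-closure clause of Lemma \ref{lem:coordcompact}.
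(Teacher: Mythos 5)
Your proof is correct and follows essentially the same route as the paper's: pass from $U\cup V\subseteq\bigcup_{i\in I}O_{i}$ to $\left(\bigcup_{i\in I}O_{i},\bigcup_{i\in I}O_{i}\right)\in\S$ via upward-closure, then invoke coordinatewise compactness from Lemma \ref{lem:coordcompact}. Your explicit merging of the two finite index sets into $K=I_{0}\cup J_{0}$ is a small bookkeeping step the paper leaves implicit, but the argument is the same.
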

\begin{proof}
If $U\cup V\subseteq\union_{i\in I}O_{i}$ for some $(U,V)\in\S$
then $\left(\union_{i\in I}O_{i},\union_{i\in I}O_{i}\right)\in\S$
so that, in view of Lemma \ref{lem:coordcompact}, there is a finite
subset $I_{0}$ of $I$ such that $\left(\union_{i\in I_{0}}O_{i},\union_{i\in I_{0}}O_{i}\right)\in\S$.
Hence $\union_{i\in I_{0}}O_{i}\in\downarrow\S$. 
\end{proof}

\begin{proof}[Proof of Theorem \ref{th:prodscottinfraC}]
 Suppose that $X$ is infraconsonant. Note that $(T[X,\$])^{2}\leq T([X,\$]^{2})$
is always true, so that we only have to prove the reverse inequality
at $(X,X)$. Consider an $[X,\$]^{2}$-open neighborhood $\S$ of
$(X,X)$. By Lemma \ref{lem:down}, the family $\downarrow\S$ is
compact. By infraconsonance, there is $\C\in\kappa(X)$ with $\C\vee\C\subseteq\downarrow\S$.
Note that \[
\C\times\C\subseteq\S,\]
 because if $(C_{1},C_{2})\in\C\times\C$ then $C_{1}\cap C_{2}\in\downarrow\S$
so that $C_{1}\cap C_{2}\supseteq U\cup V$ for some $(U,V)\in\S$,
and therefore $(C_{1},C_{2})\in\S$.

Conversely, assume that $\N_{[X,\$]^{2}}(X,X)=\N_{T[X,\$]^{2}}(X,X)$
and let $\A\in\kappa(X)$. By Lemma \ref{lem:inter}, $\S_{\A}\in\N_{[X,\$]^{2}}(X,X)$
so that $\S_{\A}\in\N_{T[X,\$]^{2}}(X,X)$. In other words, there
are families $\B$ and $\C$ in $\kappa(X)$ such that $\B\times\C\inc\S_{\A}$.
In particular $\D:=\B\cap\C$ belongs to $\kappa(X)$ and satisfies
$\D\times\D\inc\S_{\A}$. By definition of $\S_{\A}$, we have that
$\D\vee\D\inc\A$ and $X$ is infraconsonant. 
\end{proof}

\section{Topologicity, pretopologicity and diagonality of $[X,\$]$}

A convergence space $X$ is \textit{diagonal} if for every selection
$\S[\cdot]:X\to\mathbb{F}X$ with $x\in\lm_{X}\S[x]$ for all $x\in X$
and every filter $\F$ with $x_{0}\in\lm_{X}\F$ the filter \begin{equation}
\S[\F]:=\union_{F\in\F}\bigcap_{x\in F}\S[x]\label{eq:contour}\end{equation}
 converges to $x_{0}$. If this property only holds when $\F$ is
additionally principal, we say that $X$ is $\mathbb{F}_{1}$-\emph{diagonal.
}Of course, every topology is diagonal. In fact a convergence is topological
if and only if it is both pretopological and diagonal (e.g., \cite{DG.pretop}). 

In order to compare our condition for diagonality of $[X,\$]$ with
core-compactness, we first rephrase the condition of core-compactness. 
\begin{lem}
\label{lem:corecompfamily}A topological space is core compact if
and only if for every $x\in X$, every $U\in\O(x)$ and every family
$\mathbb{H}$ of filters on $X$, we have\begin{equation}
\forall\H\in\mathbb{H}:\adh\H\cap U=\emptyset\then x\notin\adh\bigwedge_{\H\in\mathbb{H}}\H.\label{eq:corefamily}\end{equation}
\end{lem}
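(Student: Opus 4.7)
The plan is to prove each implication separately, with the forward direction a direct application of the way-below relation and the backward direction a contrapositive built around an explicit family of filters.

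For $(\Rightarrow)$, assume $X$ is core compact, and let $x$, $U\in\O(x)$, and $\mathbb{H}$ with $\adh\H\cap U=\emptyset$ for every $\H\in\mathbb{H}$ be given. The hypothesis rewrites as: for each such $\H$, the directed open family $\{X\setminus\cl H:H\in\H\}$ covers $U$. Pick $V\in\O(x)$ with $V\ll U$; then for every $\H\in\mathbb{H}$ there are $H_1,\dots,H_n\in\H$ with $V\subseteq\bigcup_{i=1}^n(X\setminus\cl H_i)$, so $\bigcap_{i=1}^n H_i\in\H$ is disjoint from $V$, i.e., $X\setminus V\in\H$. Since this holds for every $\H\in\mathbb{H}$, the set $X\setminus V$ lies in $\bigwedge_{\H\in\mathbb{H}}\H$; together with $V\in\O(x)$, this forces $x\notin\adh\bigwedge_{\H\in\mathbb{H}}\H$.

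For $(\Leftarrow)$, I argue by contrapositive. Suppose $X$ is not core compact: fix $x$ and $U\in\O(x)$ such that every $V\in\O(x)$ with $V\subseteq U$ admits an open cover $\mathcal R_V$ of $U$ no finite subfamily of which covers $V$. Let $\H_V$ be the filter generated by the base $\{X\setminus\bigcup F:F\in[\mathcal R_V]^{<\omega}\}$. Routine checks show that this base is closed under finite intersection and consists of nonempty sets (since no finite subfamily of $\mathcal R_V$ covers $V\subseteq X$, a fortiori not $X$), and that $\adh\H_V\cap U=\emptyset$ (any $y\in U$ lies in some $R\in\mathcal R_V$, which is open and disjoint from $X\setminus R\in\H_V$). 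Apply the standing hypothesis to the family $\mathbb{H}=\{\H_V:V\in\O(x),\,V\subseteq U\}$ to obtain $V^*\in\O(x)$ and $A\in\bigwedge_{\H\in\mathbb{H}}\H$ with $V^*\cap A=\emptyset$; after replacing $V^*$ with $V^*\cap U$ (still an open neighborhood of $x$ and still disjoint from $A$) we may assume $V^*\subseteq U$. Since $\H_{V^*}\in\mathbb{H}$, we have $A\in\H_{V^*}$, which yields a finite $F\subseteq\mathcal R_{V^*}$ with $A\supseteq X\setminus\bigcup F$; then $V^*\subseteq X\setminus A\subseteq\bigcup F$ exhibits a finite subfamily of $\mathcal R_{V^*}$ covering $V^*$, contradicting the choice of $\mathcal R_{V^*}$.

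The main obstacle is arranging in $(\Leftarrow)$ that the neighborhood $V^*$ supplied by the hypothesis is already one of the indices of $\mathbb{H}$, so that its associated cover $\mathcal R_{V^*}$ is at our disposal to produce the contradiction. Letting $V$ range over \emph{all} open neighborhoods of $x$ contained in $U$ achieves this; with that architectural choice, both directions reduce to routine manipulations of the way-below relation and of filter bases.
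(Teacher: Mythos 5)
Your proof is correct. The forward direction is essentially the paper's argument: both pass from $\adh\H\cap U=\emptyset$ to the open cover $\{X\setminus\cl H:H\in\H\}$ of $U$, extract a finite subcover of a relatively compact $V\in\O(x)$, and place a set disjoint from $V$ in $\bigwedge_{\H\in\mathbb{H}}\H$. The converse, however, is packaged differently. The paper argues directly: it takes the single universal family $\mathbb{H}=\{\H\in\mathbb{F}X:\adh\H\cap U=\emptyset\}$, applies \eqref{eq:corefamily} once to get $V\in\O(x)$ with $V\notin\bigl(\bigwedge_{\H\in\mathbb{H}}\H\bigr)^{\#}$, and concludes that $V$ is relatively compact in $U$ because any filter meshing with $V$ cannot lie in $\mathbb{H}$; this is two lines but leans on the filter-theoretic (meshing/adherence) formulation of relative compactness. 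You instead contrapose, building one explicit filter $\H_V$ out of a witnessing bad cover $\mathcal R_V$ for each candidate neighborhood $V\subseteq U$, and then use the injectivity of the indexing $V\mapsto\H_V$ to recover the cover $\mathcal R_{V^*}$ associated with the neighborhood $V^*$ that the hypothesis produces. Your route is longer and requires the small normalizations you flag (nonemptiness of the base, shrinking $V^*$ into $U$), but it stays entirely at the level of open covers and finite subfamilies, so it never needs the equivalence between the cover-theoretic and mesh-theoretic notions of relative compactness; the paper's route is the one that generalizes immediately to the convergence-space setting it works in elsewhere. Both are valid proofs of the lemma.
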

\begin{proof}
If $X$ is core compact, then there is $V\in\O(x)$ which is relatively
compact in $U$. If $\adh\H\cap U=\emptyset$ for every $\H\in\mathbb{H}$,
then $U\subseteq\bigcup_{H\in\H}(\cl H)^{c}$ so that, by relative
compactness of $V$ in $U$ there is, for each $\H\in\mathbb{H},$
a set $H_{\H}\in\H$ with $V\cap\cl H_{\H}=\emptyset$. Then $\bigcup_{\H\in\mathbb{H}}H_{\H}\in\bigwedge_{\H\in\mathbb{H}}\H$
but $\bigcup_{\H\in\mathbb{H}}H_{\H}\cap V=\emptyset$ so that $x\notin\adh\bigwedge_{\H\in\mathbb{H}}\H$.

Conversely, if (\ref{eq:corefamily}) is true, consider the family
$\mathbb{H}:=\{\H\in\mathbb{F}X:\adh\H\cap U=\emptyset\}$. In view
of (\ref{eq:corefamily}), $x\notin\adh\bigwedge_{\H\in\mathbb{H}}\H$
so that there is $V\in\O(x)$ such that $V\notin\left(\bigwedge_{\H\in\mathbb{H}}\H\right)^{\#}$.
Now $V$ is relatively compact in $U$ because any filter than meshes
with $V$ cannot be in $\mathbb{H}$ and has therefore adherence point
in $U$. 
\end{proof}
Recall that $[X,\$]=P[X,\$]$ if and only if $X$ is $T$-core compact,
and that, if $X$ is topological, $[X,\$]$ is topological whenever
it is pretopological. While the latest follows for instance from the
results of \cite{DM.mech}, it seems difficult to find an elementary
argument in the literature, which is why we include the following
proposition, which also illustrates the usefulness of Lemma \eqref{lem:corecompfamily}.
\begin{prop}
\label{prop:PtoT} If $X$ is topological and $[X,\$]$ is pretopological,
then $[X,\$]$ is topological. \end{prop}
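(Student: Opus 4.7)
The plan is to deduce core compactness of $X$ from pretopologicity of $[X,\$]$ by verifying the criterion of Lemma~\ref{lem:corecompfamily}; once $X$ is core compact, \eqref{eq:corevariants} yields $[X,\$]=T[X,\$]$, so $[X,\$]$ is topological.

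First I would fix $x\in X$, $U\in\O(x)$, and a family $\mathbb{H}$ of filters on $X$ with $\adh\H\cap U=\emptyset$ for every $\H\in\mathbb{H}$, and aim to show $x\notin\adh\bigwedge_{\H\in\mathbb{H}}\H$. For each $\H\in\mathbb{H}$, I would set $\P_\H:=\{X\setminus\cl H:H\in\H\}$; since $\H$ is a filter, $\P_\H$ is an ideal subbase of open subsets of $X$, and $\bigcup\P_\H=X\setminus\adh\H\supseteq U$. Hence, by~\eqref{eq:basedual}, the filter $\O^{\natural}(\P_\H)$ converges to $U$ in $[X,\$]$.

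Next, since each $\O^{\natural}(\P_\H)$ converges to $U$, the infimum $\bigwedge_{\H\in\mathbb{H}}\O^{\natural}(\P_\H)$ is finer than $\N_{[X,\$]}(U)$, and pretopologicity of $[X,\$]$ then forces it to converge to $U$ as well. Unfolding~\eqref{eq:scottconvergence} and using $x\in U$, I would extract a basic element of this infimum of the form $F=\bigcup_{\H\in\mathbb{H}}\O(X\setminus\cl H_\H)$, with $H_\H\in\H$ for each $\H$, such that $x\in\intr(\bigcap_{O\in F}O)=\intr(\bigcap_{\H}(X\setminus\cl H_\H))$. Choosing $V\in\O(x)$ contained in $\bigcap_\H(X\setminus\cl H_\H)$ then gives $V\cap H_\H=\emptyset$ for every $\H$, so $V$ is disjoint from $\bigcup_{\H\in\mathbb{H}}H_\H\in\bigwedge_{\H\in\mathbb{H}}\H$, yielding $x\notin\adh\bigwedge_{\H\in\mathbb{H}}\H$, as desired.

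The main obstacle is the last step: one must convert the global, cover-style formulation of pretopological convergence at $U$ into a single selection $\H\mapsto H_\H$ and a single neighborhood $V$ of $x$ witnessing the failure of adherence. Identifying basic elements of $\bigwedge_\H\O^{\natural}(\P_\H)$ supplies both choices simultaneously, which is precisely where pretopologicity is used in an essential, non-finitary way; simple compactness of individual filters $\O^{\natural}(\P_\H)$ would only yield finitely many witnesses per filter, not the uniform selection across $\mathbb{H}$ required here.
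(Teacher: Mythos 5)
Your proof is correct and follows essentially the same route as the paper's: the filters $\O^{\natural}(\P_{\H})$ you build are exactly the paper's $\H^{*}=\{\O(X\setminus\cl(H))\colon H\in\H\}$, and using pretopologicity on their infimum to extract the uniform selection $\H\mapsto H_{\H}$ and the neighborhood $V=\intr\bigl(\bigcap_{\H\in\mathbb{H}}(X\setminus\cl H_{\H})\bigr)$ of $x$ is the paper's argument almost verbatim. The only difference is that you make explicit the concluding step (core compactness of $X$ via Lemma \ref{lem:corecompfamily} together with \eqref{eq:corevariants} gives $[X,\$]=T[X,\$]$), which the paper leaves implicit.
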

\begin{proof}
We will show that under these assumptions, $X$ satisfies \eqref{eq:corefamily}.
Let $x\in X$ and $U\in\O(x)$. Let $\mathbb{H}$ be a family of filters
satisfying the hypothesis of \eqref{eq:corefamily}. Let $\H\in{\mathbb{H}}$.
Consider the filter base $\H^{*}:=\{\O(X\setminus\cl(H))\colon H\in\H\}$
on $[X,\$]$. Since $\adh(\H)\cap U=\emptyset$, it follows that $U\in\lim\H^{*}$.
Since $[X,\$]$ is pretopological, $U\in\lim\bigwedge_{\H\in{\mathbb{H}}}\H^{*}$.
In particular, there exist, for each $\H\in{\mathbb{H}}$, a $H_{\H}\in\H$
such that\begin{eqnarray*}
x\in\intr\left(\bigcap\bigcup_{\H\in{\mathbb{H}}}\O(X\setminus\cl(H_{\H}))\right) & = & \intr\left(\bigcap_{\H\in{\mathbb{H}}}(X\setminus\cl(H_{\H}))\right)\\
 & = & \intr\left(X\setminus(\bigcup_{\H\in{\mathbb{H}}}\cl(H_{\H}))\right)\\
 & \subseteq & X\setminus\cl(\bigcup_{\H\in{\mathbb{H}}}H_{\H}).\end{eqnarray*}
 Thus, $x\notin\adh(\bigwedge_{\H\in{\mathbb{H}}}\H)$.
\end{proof}
In other words, if $[X,\$]$ is pretopological it is also diagonal,
provided that $X$ is topological. We will see that even if $X$ is
topological, $[X,\$]$ is not always diagonal. Moreover it can be
diagonal without being pretopological (examples \ref{exa:diagnonP*}
and \ref{exa:diagnonP}) and pretopological but not diagonal (Example
\ref{exa:TcorenotTdual}), when $X$ is no longer assumed to be topological.

We call a topological space \emph{injectively core compact} if for
every $x\in X$ and $U\in\O(x)$ the conclusion of (\ref{eq:corefamily})
holds for every family $\mathbb{H}$ of filters such that there is
an injection $\theta:\mathbb{H}\to\O(U)$ satisfying $\adh\H\cap\theta(\H)=\emptyset$
for each $\H\in\mathbb{H}$. As such a family $\mathbb{H}$ clearly
satisfies the premise of (\ref{eq:corefamily}), every core compact
space is in particular injectively core compact. 
\begin{thm}
\label{thm:diagonaldual}Let $X$ be a topological space. The following
are equivalent: 
\begin{enumerate}
\item $X$ is injectively core compact; 
\item $[X,\$]$ is diagonal; 
\item $[X,\$]$ is $\mathbb{F}_{1}$-diagonal. 
\end{enumerate}
\end{thm}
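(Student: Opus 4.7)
The implication (2)$\Rightarrow$(3) is immediate. For (3)$\Rightarrow$(1) and (1)$\Rightarrow$(2) the key tool will be the representation \eqref{eq:basedual} of convergent filters in $[X,\$]$ via ideal subbase covers.

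For (3)$\Rightarrow$(1) I argue by contrapositive: if $X$ fails injective core-compactness, take witnesses $x$, $U\in\O(x)$, a family $\mathbb{H}$ of filters, and an injection $\theta:\mathbb{H}\to\O(U)$ with $\adh\H\cap\theta(\H)=\emptyset$ for all $\H\in\mathbb{H}$, while $x\in\adh\bigwedge_{\H\in\mathbb{H}}\H$. Set $\A:=\theta(\mathbb{H})\subseteq\O_X$; since every $V\in\A$ contains $U$, the principal filter $\{\A\}^{\uparrow}$ converges to $U$ in $[X,\$]$. Define a selection $\S$ by $\S[\theta(\H)]:=\{\O(X\setminus\cl H):H\in\H\}^{\uparrow}$ for $\H\in\mathbb{H}$ (well-defined by injectivity of $\theta$) and $\S[V]:=\{V\}^{\uparrow}$ elsewhere. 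The hypothesis $\adh\H\cap\theta(\H)=\emptyset$ translates to the cover condition $\theta(\H)\subseteq\bigcup_{H\in\H}(X\setminus\cl H)$, whence $\theta(\H)\in\lm_{[X,\$]}\S[\theta(\H)]$. By $\mathbb{F}_{1}$-diagonality, $\S[\{\A\}^{\uparrow}]=\bigcap_{V\in\A}\S[V]$ converges to $U$; choosing $G$ in this filter with $x\in\intr(\bigcap_{W\in G}W)$ and picking $H_\H\in\H$ with $\O(X\setminus\cl H_\H)\subseteq G$ for each $\H$, one obtains $\intr(\bigcap_{W\in G}W)\subseteq X\setminus\cl(\bigcup_{\H}H_\H)$, so $x\notin\cl(\bigcup_{\H}H_\H)$. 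Since $\bigcup_{\H}H_\H\in\bigwedge_{\H\in\mathbb{H}}\H$, this contradicts $x\in\adh\bigwedge_{\H\in\mathbb{H}}\H$.

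For (1)$\Rightarrow$(2), let $\F$ converge to $U_0$ in $[X,\$]$ and let $\S$ be a selection with $V\in\lm_{[X,\$]}\S[V]$ for every $V\in\O_X$. Fix $x\in U_0$. Pick $F_0\in\F$ with $x\in U_1:=\intr(\bigcap_{V\in F_0}V)$, and for each $V\in F_0$ apply \eqref{eq:basedual} to obtain an ideal subbase cover $\P_V$ of $V$ with $\O^{\natural}(\P_V)\leq\S[V]$. The plan is to produce an open $O\ni x$ such that every $\P_V$ contains a member $P'_V\supseteq O$: granted this, $G:=\O(O)$ contains $\O(P'_V)$, hence lies in $\O^{\natural}(\P_V)\subseteq\S[V]$ for every $V\in F_0$, so $G\in\bigcap_{V\in F_0}\S[V]\subseteq\S[\F]$, and $\bigcap_{W\in G}W=O$ is an open neighborhood of $x$, completing the proof that $U_0\in\lm_{[X,\$]}\S[\F]$.

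To build $O$, I use injective core-compactness. For each $V\in F_0$, the family $\{X\setminus P':P'\in\P_V\}$ is a filter-base on $X$ (thanks to the ideal property), generating a filter $\H_V$ with $\adh\H_V=X\setminus\bigcup\P_V\subseteq X\setminus V$, so $\adh\H_V\cap V=\emptyset$. Each $V\in F_0$ lies in $\O(U_1)$, and after replacing $\{\H_V:V\in F_0\}$ by its set of distinct filters and choosing a representative $V$ per filter, $\H\mapsto V$ becomes an injection into $\O(U_1)$; this reduction does not alter $\bigwedge_{V\in F_0}\H_V$. Injective core-compactness applied at $x$ with $U_1$ now yields $O\in\O(x)$ and $A\in\bigwedge_{V\in F_0}\H_V$ with $O\cap A=\emptyset$; for each $V\in F_0$, the membership $A\in\H_V$ provides $P'_V\in\P_V$ with $X\setminus A\subseteq P'_V$, so $O\subseteq X\setminus A\subseteq P'_V$, as required. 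The main technical obstacles are the translation between filters on $\O_X$ and filters on $X$ via \eqref{eq:basedual} and securing the injectivity needed to invoke injective core-compactness.
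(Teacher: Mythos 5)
Your proof is correct and follows essentially the same route as the paper's: the same cycle (1)$\Rightarrow$(2)$\Rightarrow$(3)$\Rightarrow$(1), with the same translation between convergent filters on $\O_{X}$ and filters on $X$ whose adherences miss the relevant open sets. The differences are only cosmetic --- in (1)$\Rightarrow$(2) you route the argument through the ideal-subbase representation \eqref{eq:basedual} where the paper works directly with the sets $\cl\left(\bigcup_{W\in S}W^{c}\right)$ for $S\in\S[O]$, and in (3)$\Rightarrow$(1) you sidestep the paper's $\sim$-equivalence bookkeeping by defining the selection directly from the families $\O(X\setminus\cl H)$.
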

\begin{proof}
(1)$\then$(2): Let $\S[\centerdot]:\O_{X}\to\mathbb{F}\O_{X}$ be
a selection for $[X,\$]$ and let $U\in\lm_{[X,\$]}\F$. If $x\in U$,
there is $F\in\F$ such that $x\in\intr\left(\bigcap_{O\in F}O\right):=V$.
Note that $F\subseteq\O(V)$. For each $O\in F$, consider the filter
$\H_{O}$ on $X$ generated by $\{\cl_{X}$$\left(\bigcup_{W\in S}W^{c}\right):S\in\S[O]\}$.
Because $O\in\lm_{[X,\$]}\S[O]$, we have that $\adh_{X}\H_{O}\cap O=\emptyset$.
Because $X$ is injectively core compact and $\mathbb{H}:=\{\H_{O}:O\in F\}$
satisfies the required condition (with $\theta(\H_{O})=O$ ), we conclude
that $x\notin\adh_{X}\bigwedge_{O\in F}\H_{O}$. In other words, there
is $H\in\bigwedge_{O\in F}\H_{O}$ such that $x\notin\cl_{X}H$, that
is, $x\in\intr_{X}H^{c}$. Therefore, for each $O\in F$ there is
$S_{O}\in\S[O]$ such that \[
x\in\intr\left(\bigcap_{O\in F}\intr\left(\bigcap_{W\in S_{O}}W\right)\right)\subseteq\intr\left(\bigcap_{W\in\bigcup_{O\in F}S_{O}}W\right).\]
 In other words, there is $F\in\F$ and $M\in\bigwedge_{O\in F}\S[O]$
such that $x\in\intr_{X}\left(\bigcap_{W\in M}W\right)$, that is,
$U\in\lim_{[X,\$]}\S[\F]$.

(2)$\then$(3) is clear. (3)$\then$(1): Suppose $X$ is not injectively
core compact. Then there is $x\in X$, $U\in\O(x)$ and a family $\mathbb{H}$
of filters on $X$ with an injective map $\theta\colon\mathbb{H}\to\O(U)$
such that $\theta(\H)\cap\adh_{X}\H=\emptyset$ for each $\H\in\mathbb{H}$
but $x\in\adh_{X}\bigwedge_{\H\in\mathbb{H}}\H$. Define a relation
$\sim$ on ${\mathbb{H}}$ by $H_{1}\sim H_{2}$ provided that the
collections $\{\cl(H)\colon H\in\H_{1}\}$ and $\{\cl(H)\colon H\in\H_{2}\}$
both generate the same filter. Clearly, $\sim$ is an equivalance
relation. Let ${\mathbb{H}}^{*}\subseteq{\mathbb{H}}$ be such that
${\mathbb{H}}^{*}$ contains exactly one element of each equivalance
class of $\sim$. For each $\H\in{\mathbb{H}}^{*}$ let $\H^{*}$
be the filter with base $\{\cl(H)\colon H\in\H\}$. Let ${\mathbb{J}}=\{\H^{*}\colon\H\in{\mathbb{H}}^{*}\}$.

Define $\theta^{*}\colon{\mathbb{J}}\to\O(U)$ so that $\theta^{*}(\J)=\theta(\H)$,
where $\H\in{\mathbb{H}}^{*}$ is such that $\J=\H^{*}$. It is easily
checked that $\theta^{*}$ is injective. Since $\adh(\H^{*})=\adh(\H)$
for every $\H\in{\mathbb{H}}^{*}$, we have $\theta^{*}(\J)\cap\adh(\J)=\emptyset$.
It is also easy to check that $x\in\adh\left(\bigwedge_{\J\in{\mathbb{J}}}\J\right)$.

For each $\J\in{\mathbb{J}}$, the filter $\widetilde{\J}$ generated
on $\O_{X}$ by the filter-base $\left\{ O_{X}(X\setminus J)\colon J\in\J\right\} $
converges to $\theta^{*}(\J)$. Consider now the subset $\theta^{*}(\mathbb{J})$
of $\O(U)\subseteq\O_{X}$ and the selection $\S[\centerdot]:\O_{X}\to\mathbb{F}\O_{X}$
defined by $\S[\theta(\J)]=\widetilde{\J}$ for each $\J\in\mathbb{J}$
and $\S[O]=\{O\}^{\uparrow}$ for $O\notin\theta^{*}(\mathbb{J})$.
This is indeed a well-defined selection because $\theta^{*}$ is injective.

Notice that $U\in\lim_{[X,\$]}\theta^{*}(\mathbb{J})$ because $\theta^{*}(\mathbb{J})\subseteq\O(U)$.
Let $L\in\S[\theta^{*}(\mathbb{J})]$. We may pick from each $\J\in{\mathbb{J}}$
a closed set $J_{\J}\in\J$ such that $\bigcup_{\J\in{\mathbb{J}}}\O_{x}(X\setminus J_{\J})\subseteq L$.
Let $V$ be an open neighborhood of $x$. Since $x\in\adh_{X}\bigwedge_{\J\in\mathbb{J}}\J$
and $\bigcup_{\J\in{\mathbb{J}}}J_{\J}\in\bigwedge_{\J\in\mathbb{J}}\J$,
there is an $\J_{0}\in{\mathbb{J}}$ such that $V\cap J_{\J_{0}}\neq\emptyset$.
Since $V\not\subseteq X\setminus J_{\J_{0}}$ and $X\setminus J_{\J_{0}}\in\O_{X}(X\setminus J_{J_{0}})\subseteq L$,
$V\not\subseteq\bigcap\O_{X}(X\setminus J_{\J_{0}})$. Since $\O_{X}(X\setminus J_{\J_{0}})\subseteq L$,
$V\not\subseteq\bigcap L$. Since $V$ was an arbitrary neighborhood
of $x$, $x\not\in\intr(\bigcap L)$. Thus, $U\notin\S[\theta^{*}(\mathbb{J})]$.
Therefore, $[X,\$]$ is not $\mathbb{F}_{1}$-diagonal. 
\end{proof}
A cardinal number $\kappa$ is \emph{regular }if a union of less than
$\kappa$-many sets of cardinality less than $\kappa$ has cardinality
less than $\kappa$. A strong limit cardinal $\kappa$ is a cardinal
for which $\card(2^{A})<\kappa$ whenever $\card(A)<\kappa$. A \emph{strongly
inaccessible cardinal }is a regular strong limit cardinal. Uncountable
strongly inaccessible cardinals cannot be proved to exist within ZFC,
though their existence is not known to be inconsistent with ZFC. Let
us denote by ({*}) the assumption that such a cardinal exist. 
\begin{example}[{A Hausdorff space $X$ such that $[X,\$]$ is diagonal but not pretopological
under ({*})}]
\label{exa:diagnonP*} Assume that $\kappa$ is a (uncountable) strong
limit cardinal. Let $X$ be the subspace of $\kappa\cup\left\{ \kappa\right\} $
endowed with the order topology, obtained by removing all the limit
ordinals but $\kappa$. Since $X$ is a non locally compact Hausdorff
topological space, $[X,\$]$ is not pretopological. To show that $X$
is injectively core compact, we only need to consider $x=\kappa$
and $U\in\O(\kappa)$ in the definition, because $\kappa$ is the
only non-isolated point of $X$. Let $\mathbb{H}$ be a family of
filters on $X$ admitting an injective map $\theta:\mathbb{H}\to\O(U)$
such that $\adh\H\cap\theta(\H)=\emptyset$ for each $\H\in\mathbb{H}.$
Let $\beta$ be the least element of $U$. For each $\H\in\mathbb{H}$
there is $H_{\H}\in\H$ such that $\beta\notin H_{\H}$ so that $\card(H_{})<\beta$.
Moreover, $\card\mathbb{H}\leq\card\O(U)=2^{\beta}.$ Since $\kappa$
is a strong limit cardinal, $\card\mathbb{H}<\kappa$. Since $\kappa$
is regular, $\bigcup_{\H\in\mathbb{H}}H_{\H}<\kappa$ so that $\kappa\notin\adh\bigwedge_{\H\in\mathbb{H}}\H$. 
\end{example}
We do not know if the existence of large cardinals is necessary for
the construction of a Hausdorff space $X$ such that $[X,\$]$ is
diagonal and not pretopological. However, we can construct in ZFC
a $T_{0}$ space $X$ such that $[X,\$]$ is diagonal and not pretopological.
\begin{example}[{A $T_{0}$ space $X$ such that $[X,\$]$ is diagonal but not pretopological.}]
\label{exa:diagnonP} Let $\z$ stand for integers, $\cuum$ be the
cardinality of the continuum, and $\cuum^{+}$ be the cardinal successor
of $\cuum$. Let $\infty$ be a point that is not in $\cuum^{+}\times\z$
and $X=\{\infty\}\cup(\cuum^{+}\times\z)$. For each $(\alpha,n)\in\cuum^{+}\times\mathbb{Z}$
define $S_{\alpha,n}=\{(\beta,k)\colon\alpha\leq\beta\text{ and }n\leq k\}$.
For each $\alpha\in\cuum^{+}$, let $T_{\alpha}=\{(\beta,k)\colon\alpha\leq\beta\text{ and }k\in\z\}\cup\{\infty\}$.
Topologize $X$ by declaring all sets of the form $T_{\alpha}$ and
$S_{\alpha,n}$ to be sub-basic open sets.

We show that $X$ is not core compact at $\infty$. Let $U$ be a
neighborhood of $\infty$. There is an $\alpha$ such that $T_{\alpha}\subseteq U$.
Notice that $T_{\alpha+1}\cup\{S_{0,n}\colon n\in\z\}$ is a cover
of $X$ but no finite subcollection covers $T_{\alpha}$. Thus, $X$
is not core compact at $\infty$. In particular, $[X,\$]$ is not
pretopological.

Let $(\alpha,n)\in X\setminus\{\infty\}$. Let $U$ be an open neighborhood
of $(\alpha,n)$. Since $(\alpha,n)\in U$ it follows from the way
we chose our sub-base that $T_{\alpha,n}\subseteq U$. Since $(\alpha,n)$
has a minimal open neighborhood, $X$ is core compact at $(\alpha,n)$.

Let $V$ be an open neighborhood of $\infty$. There is an $\alpha$
such that $T_{\alpha}\subseteq V$. Let $U\subseteq X$ be an open
superset of $V$. For every $n\in\z$ $U\cap(\cuum^{+}\times\{n\})\neq\emptyset$.
For each $n\in\z$ define $\alpha_{n}=\min\{\beta\colon(\beta,n)\in U$.
Notice that $\{\beta\colon\alpha_{n}\leq\beta\}\times\{n\}=U\cap(\cuum^{+}\times\{n\})$
and $\alpha_{n}\leq\alpha$. Since each open superset of $V$ will
determine a unique sequence $(\alpha_{n})_{n\in z}$, it follows that
the open supersets of $V$ can injectively be mapped into the countable
sequences on $\{\beta\colon\beta\leq\alpha\}\times\z$. Since $\{\beta\colon\beta\leq\alpha\}\times\z$
has cardinality at most $\cuum$, $\{\beta\colon\beta\leq\alpha\}\times\z$
has at most $\cuum$-many countable sequences. Thus, $V$ has most
$\cuum$-many supersets.

Let $V$ be an open neighborhood of $\infty$, ${\mathbb{H}}$ be
a collection of filters, and $\theta\colon{\mathbb{H}}\to\O_{X}(V)$
be an injection such that $\adh(\H)\cap\theta(\H)=\emptyset$ for
every $\H\in{\mathbb{H}}$. Since $V$ has at most $\cuum$-many open
supersets, $|{\mathbb{H}}|\leq\cuum$. Let $\H\in{\mathbb{H}}$. Since
$\infty\notin\adh\H$, there is an $\alpha_{\H}\in\cuum^{+}$ such
that $\adh(\H)\cap T_{\alpha_{\H}}=\emptyset$. Let $\alpha=(\sup_{\H\in{\mathbb{H}}}\alpha_{\H})+1<\cuum^{+}$.
It is easy to check that, $\adh\left(\bigwedge_{\H\in{\mathbb{H}}}\H\right)\cap T_{\alpha}=\emptyset$.
Thus, $X$ is injectively core compact at $\infty$.

Since $X$ is injectively core compact at each point, $[X,\$]$ is
diagonal, by Theorem~\ref{thm:diagonaldual}. 
\end{example}

\begin{example}[A $T$-dual convergence space that is not core compact]
\label{exa:tdualnotcorec}  Consider a partition $\left\{ \mathbb{A}_{n}:n\in\omega\right\} $
of the set $\omega^{*}$ of free ultrafilters on $\omega$ satisfying
the condition that for every infinite subset $S$ of $\omega$ and
every $n\in\omega$, there is $\U\in\mathbb{A}_{n}$ with $S\in\U$.
Let $M:=\{m_{n}:n\in\omega\}$ be disjoint from $\omega$ and let
$X:=\omega\cup M$. Define on $X$ the finest convergence in which
$\lim\{m_{n}\}^{\uparrow}=M$ for all $n\in\omega$, and each free
ultrafilter $\U$ on $\omega$ converges to $m_{n}$ (and $m_{n}$
only), where $n$ is defined by $\U\in\mathbb{A}_{n}$. 
\begin{claim*}
$X$ is not core compact.
\begin{proof}
Let $m_{n}\in M$ and $\U\in\mathbb{A}_{n}$. Pick $S\subseteq\omega$,
$S\in\U$, and $k\neq n$. For every $U\in\U$ there is $\W\in\mathbb{A}_{k}$
such that $U\in\W$. But $\lim\W=\{m_{k}\}$ is disjoint from $S$.
\end{proof}
\end{claim*}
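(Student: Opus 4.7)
The plan is to exhibit a point of $X$ and a convergent filter there for which no sub-filter satisfies the two conditions in the definition of core-compactness. The natural candidate is to take $\F=\U$ for some free ultrafilter $\U\in\mathbb{A}_n$, so that $m_n\in\lm\U$ by the construction of the convergence.

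My first step is to argue that the only $\G\leq\U$ that still converges to $m_n$ is $\U$ itself. By construction, a filter converges to $m_n$ iff it is finer than either $\{m_n\}^{\uparrow}$ or some ultrafilter $\W\in\mathbb{A}_n$. The first option is excluded because $\U$ does not contain $\{m_n\}$ (being free on $\omega$), and the second forces $\W\subseteq\G\subseteq\U$, which by maximality of ultrafilters gives $\W=\U=\G$. So the whole question reduces to whether, for every $G\in\U$, one can find some $G'\in\U$ that is compact at $G$.

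I plan to rule this out by fixing $G=S$ to be any element of $\U$ with $S\subseteq\omega$ (for instance $S=\omega$ itself, which belongs to $\U$ when the latter is viewed canonically as an ultrafilter on $X$). Given an arbitrary candidate $G'\in\U$, its trace $G'\cap\omega$ lies in $\U$ and is in particular an infinite subset of $\omega$. I now invoke the defining property of the partition $\{\mathbb{A}_j\}_{j\in\omega}$: for any chosen $k\neq n$, there exists $\W\in\mathbb{A}_k$ with $G'\cap\omega\in\W$, and hence $G'\in\W$ once $\W$ is extended to a filter on $X$. But $\lm\W=\{m_k\}$, and $\{m_k\}\cap S=\emptyset$ since $m_k\notin\omega$. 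This ultrafilter $\W$ therefore witnesses that $G'$ is not compact at $S$.

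The place to be careful is the bookkeeping of how a free ultrafilter on $\omega$ is regarded canonically as an ultrafilter on $X$ and, in particular, the verification that the "diverting" ultrafilter $\W$ supplied by the partition property truly converges in $X$ only to the wrong point $m_k$. Once this is transparent, the failure of core-compactness at $m_n$ is immediate from the disjointness of $\omega$ and $M$, so no genuine obstacle remains; the partition property is exactly the ingredient that lets us sabotage any prospective $G'$.
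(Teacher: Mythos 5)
Your proof is correct and follows essentially the same route as the paper's: the paper's one-line argument is exactly your final step (any $G'\in\U$ lies in some $\W\in\mathbb{A}_k$, $k\neq n$, whose limit $\{m_k\}$ misses $S\subseteq\omega$), and your preliminary reduction showing that $\G=\U$ is the only coarsening of $\U$ converging to $m_n$ is just the bookkeeping the paper leaves implicit. The only slight imprecision is your characterization of convergence to $m_n$: since $\lim\{m_k\}^{\uparrow}=M$ for \emph{every} $k$, you must exclude $\G\geq\{m_k\}^{\uparrow}$ for all $k$, not just $k=n$ --- but this is equally immediate because $\U$, being generated by subsets of $\omega$, contains no singleton from $M$.
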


\begin{claim*}
$X$ is $T$-core compact, and therefore $[X,\$]$ is pretopological.
\begin{proof}
For each $m_{n}\in M$, the set $M$ is included in every open set
containing $m_{n}$ because $m_{n}\in\bigcap_{k\in\omega}\lim\{m_{k}\}^{\uparrow}.$
If $\U$ is a non-trivial convergent ultrafilter in $X$ then $\lim\U=\{m_{n}\}$
for some $n\in\omega.$ For any $S\in\U$, $S\cap\omega$ is infinite
and any free ultrafilter $\W$ on $S\cap\omega$ belongs to one of
the element $\mathbb{A}_{k}$ of the partition, so that $\lim\W=\{m_{k}\}$
intersects $M$, and therefore any open set containing $m_{n}$.
\end{proof}
\end{claim*}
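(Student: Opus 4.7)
The plan is to verify the definition of $T$-core compactness of $X=\omega\cup M$ pointwise. At an isolated point $n\in\omega$ the only convergent filter is $\{n\}^{\uparrow}$, and $\{n\}$ is trivially compact at any containing set, so the condition holds automatically. The substantive case is a point $m_k\in M$ equipped with an arbitrary $U\in\O_{TX}(m_k)$ and filter $\F$ with $m_k\in\lim\F$.

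Two structural observations drive the argument. First, since $\{m_j\}^{\uparrow}\to m_k$ in the convergence for every $j\in\omega$, the openness of $U$ in $TX$ forces $m_j\in U$, whence $M\subseteq U$. Second, $U$ being open in the topological modification means $U\in\F$ for every $\F$ converging to $m_k$.

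Next I would exploit the classification of filters converging to $m_k$ in the finest convergence specified in the example: such an $\F$ is either finer than some $\{m_j\}^{\uparrow}$, or finer than some free ultrafilter $\U\in\mathbb{A}_k$. In the first case $\{m_j\}\in\F$, and the only ultrafilter extending $\{m_j\}$ is $\{m_j\}^{\uparrow}$, which converges to $m_j\in M\subseteq U$; so $\{m_j\}$ is compact at $U$. In the second case I would take $F:=U\cap\omega$, which belongs to $\F$ by the two observations together with $\omega\in\U$. For any ultrafilter $\W$ with $F\in\W$: if $\W=\{p\}^{\uparrow}$ is principal at $p\in U\cap\omega$, then $\lim\W=\{p\}\subseteq U$; otherwise $\W$ is a free ultrafilter on $\omega$, hence $\W\in\mathbb{A}_j$ for some $j$ by the partition hypothesis, and $\lim\W=\{m_j\}\subseteq M\subseteq U$. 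Either way $\W$ has a limit in $U$, so $F$ is compact at $U$.

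The delicate point that guides the choice of $F$ is that one must select a member of $\F$ actually contained in $U$; otherwise a principal ultrafilter at a point outside $U$ would still contain $F$ and witness a failure of compactness. This is why trimming by $U$ before intersecting with $\omega$ is essential, and it is also the only place where the two preliminary observations genuinely combine. Once $T$-core compactness is established, the pretopologicity of $[X,\$]$ is immediate from the already-recorded equivalence $[X,\$]=P[X,\$]\iff X$ is $T$-core compact.
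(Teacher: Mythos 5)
Your proof is correct and follows essentially the same route as the paper's: the two key facts are that $M$ is contained in every open set meeting $M$, and that free ultrafilters on infinite subsets of $\omega$ converge into $M$, hence into any such open set. Your insistence on taking $F=U\cap\omega$ rather than an arbitrary $S\in\U$ is in fact a genuine improvement in precision: as literally written, the paper's ``for any $S\in\U$'' overlooks principal ultrafilters at points of $S\setminus U$ (and non-convergent free ultrafilters living on $S\cap M$), which is exactly the failure mode you flag and avoid.
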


\begin{claim*}
$[X,\$]$ is diagonal. 
\begin{proof}
Let $\S[\centerdot]:\O_{X}\to\mathbb{F}\O_{X}$ be a selection for
$[X,\$]$ and let $U\in\lim_{[X,\$]}\F$. Now, $\{\bigcap F\colon F\in\F\}$
is a (convergence) cover of $U$.

Let $x\in U$ and $\D$ be a filter on $X$ such that $x\in\lim\D$.
There is an $F\in\F$ and a $D\in\D$ such that $D\subseteq\bigcap F:=V$.

Assume $x\in\omega$, in which case $\D=\{x\}^{\uparrow}$. In particular,
$x\in O$ for every $O\in F$. For every $O\in F$ there is a $T_{O}\in S[O]$
such that $x\in\bigcap T_{O}$. Now, $x\in\bigcap\bigcap_{O\in F}T_{O}\in\S[F]$.
So, $\bigcap\bigcap_{O\in F}T_{O}\in\{x\}^{\uparrow}=\D$.

Assume $x\in M$. In this case, $M\cap O\neq\emptyset$ for all $O\in F$
and, by definition of the convergence on $X$, $M\subseteq O$ for
all $O\in F$. Since $O\in\lim_{[X,\$]}\mathcal{S}[O]$ and $M\subseteq O$,
there is $S\in\mathcal{S}[O]$ such that $x\in\bigcap S$, and, since
each element of $S$ is open, $M\subseteq\bigcap S$. If there is
no $S\in\S[O]$ such that $O\subseteq\bigcap S$ then the filter $\H$
generated by $\{(O\cap\omega)\setminus\bigcap S:S\in\S[O],S\subseteq S_{0}\}$
is non degenerate. Notice that it is not free, for otherwise there
would be an $n\in\omega$ and $\U\in\mathbb{A}_{n}$ with $\U\geq\H$.
But $m_{n}\in\lm\U\cap O$, and there would be $S\in\S[O]$ such that
$\bigcap S\in\U$, which is not possible. Therefore there is $y\in\bigcap_{S\in\backslash S[O]}\left(O\setminus\bigcap S\right)$
which contradicts $O\in\lm_{[X,\$]}\S[O]$. Hence, there is $S_{0}\in\S[O]$
such that $O\subseteq\bigcap S_{0}$. Now, $D\subseteq\bigcap F\subseteq\bigcap_{O\in F}\bigcap S_{O}$.
In particular, $\bigcap_{O\in F}\bigcap S_{O}\in\D$.

Thus, $\{\bigcap J\colon J\in\S[\F]\}$ is a cover of $U$, and $[X,\$]$
is diagonal.
\end{proof}
\end{claim*}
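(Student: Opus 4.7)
To show $[X,\$]$ is diagonal, I would unpack the definition: given a selection $\S[\cdot]:\O_X\to\mathbb{F}\O_X$ with $O\in\lim_{[X,\$]}\S[O]$ for every $O$, and a filter $\F$ on $\O_X$ with $U\in\lim_{[X,\$]}\F$, I must show that the contour $\S[\F]=\bigcup_{F\in\F}\bigcap_{O\in F}\S[O]$ still converges to $U$ in $[X,\$]$. By the cover characterization of the induced convergence, this amounts to showing that for every $x\in U$ and every filter $\D$ on $X$ with $x\in\lim_X\D$, some $J\in\S[\F]$ has $\bigcap J\in\D$. As $U\in\lim_{[X,\$]}\F$, there are already $F\in\F$ and $D\in\D$ with $D\subseteq\bigcap F$, so the task is to find $J$ inside $\bigcap_{O\in F}\S[O]$ satisfying $\bigcap F\subseteq\bigcap J$.

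The argument splits on whether $x\in\omega$ or $x\in M$. If $x\in\omega$, then $x$ is isolated so $\D=\{x\}^{\uparrow}$, and since $x\in\bigcap F$ each $O\in F$ contains $x$; convergence of $\S[O]$ to $O$ delivers $T_O\in\S[O]$ with $x\in\bigcap T_O$, and the family $J:=\bigcup_{O\in F}T_O$ lies in $\bigcap_{O\in F}\S[O]\subseteq\S[\F]$ and has $x\in\bigcap J$, as needed.

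The delicate case is $x=m_n\in M$. Two structural features of $X$ come into play: every open set containing some $m_k$ automatically contains all of $M$ (so $M\subseteq O$ for every $O\in F$), and the free ultrafilters converging to any $m_k$ are exactly those in $\mathbb{A}_k$. The key step is to strengthen the easy observation that some $S_O\in\S[O]$ has $m_n\in\bigcap S_O$ to the assertion that some $S_O\in\S[O]$ has $O\subseteq\bigcap S_O$. I would argue this by contradiction: were it to fail, the sets $(O\cap\omega)\setminus\bigcap S$, for $S\in\S[O]$, would generate a non-degenerate filter $\H$ on $\omega$. If $\H$ admitted a free ultrafilter refinement $\U\in\mathbb{A}_k$, then $\U$ would converge to $m_k\in O$, and the cover property of $\S[O]$ at $m_k$ would force some $\bigcap S$ into $\U$, contradicting the construction of $\H$; if instead $\H$ had a fixed refinement, a singleton $\{j\}\in\H$ would exhibit an isolated point $j\in O\cap\omega$ omitted by every $\bigcap S$, contradicting $O\in\lim_{[X,\$]}\S[O]$ at $j$. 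With such $S_O$ chosen for every $O\in F$, the family $J:=\bigcup_{O\in F}S_O$ lies in $\bigcap_{O\in F}\S[O]\subseteq\S[\F]$ and satisfies $\bigcap F\subseteq\bigcap J$, so $\bigcap J\in\D$.

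The main obstacle is precisely this upgrade in the $M$-case. The other steps are routine unpacking of the cover description of $[X,\$]$, but excluding both possible refinements of the auxiliary filter $\H$ uses essentially the defining property of the partition $\{\mathbb{A}_n:n\in\omega\}$: every infinite $S\subseteq\omega$ supports a free ultrafilter in each class $\mathbb{A}_k$. Without this feature, $\H$ could have an ultrafilter refinement converging to some $m_k\notin O$, and the contradiction would collapse.
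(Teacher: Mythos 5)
Your proof is correct and follows essentially the same route as the paper's: the same case split on $x\in\omega$ versus $x\in M$, and the same upgrade via the auxiliary filter $\H$ generated by the sets $(O\cap\omega)\setminus\bigcap S$, ruling out both a free and a fixed refinement. One small quibble with your closing commentary: the step excluding a free refinement does not really use the richness property that every infinite $S\subseteq\omega$ meets every class $\mathbb{A}_k$ (that property is what makes $X$ fail to be core compact); it only uses that the classes $\mathbb{A}_n$ partition \emph{all} free ultrafilters, so any free $\U\geq\H$ converges to some $m_k\in M\subseteq O$.
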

Therefore $[X,\$]$ is pretopological and diagonal, hence topological,
and $X$ is $T$-dual.

\end{example}

\begin{example}[A convergence space that is $T$-core compact but not $T$-dual]
\label{exa:TcorenotTdual} Let $M$ be any topological space such
that $[M,\$]$ is not diagonal and $L$ be a countably infinite set
disjoint from $M$. Let $X=M\cup L$. We now make $X$ into a convergence
space. Define $x\in\lim_{\xi}\{x\}^{\uparrow}$ for each $x\in X$.
For each $x\in M$ define $x\in\lim_{\xi}\F$ whenever $\F$ is the
cofinite filter on $L$ or a filter that that converges to $x$ in
$M$ with its topology. For $x\in L$ define $x\in\lim_{\xi}\{x\}^{\uparrow}$.
Note that any non-empty open subset $O$ of $X$ that intersects $M$
contains all but finitely many points of $L$.
\begin{claim*}
$[X,\$]$ is pretopological. \end{claim*}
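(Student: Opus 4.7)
The plan is to show $X$ is $T$-core compact; by the equivalence $[X,\$]=P[X,\$]\iff X\text{ is }T\text{-core compact}$ recalled earlier in the paper, this yields pretopologicity of $[X,\$]$. Concretely, for each convergent filter $\F$ with $x\in\lim_{\xi}\F$ and each open neighborhood $U$ of $x$ in $TX$, I will exhibit $F\in\F$ that is compact at $U$, splitting according to the type of $\F$. Recall that a non-empty open $U$ meeting $M$ has $L\setminus U$ finite (noted immediately after the construction of $X$), and that at $x\in M$ the basic convergent filters are $\{x\}^{\uparrow}$, the cofinite filter $\F_{\mathrm{cof}}$ on $L$, and filters converging to $x$ in $M$'s topology, together with their refinements.

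The straightforward cases go as follows. If $\F=\{x\}^{\uparrow}$---in particular for every $x\in L$---then $F:=\{x\}$ works, since its unique containing ultrafilter $\{x\}^{\uparrow}$ converges to $x\in U$. If $x\in M$ and $\F\supseteq\F_{\mathrm{cof}}$, take $F:=L\cap U$, a cofinite subset of $L$ lying in $U$, hence $F\in\F$; any ultrafilter $\W$ containing $F$ is either principal at a point of $L\cap U\subseteq U$, or free on $L$, in which case it refines $\F_{\mathrm{cof}}$ and therefore converges to every point of $M$, in particular to $x\in U$.

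The principal remaining case is $x\in M$ with $\F\supseteq\mathcal{N}_M(x)^{X\uparrow}$. I propose $F:=V\cup(L\cap U)$ where $V\in\mathcal{N}_M(x)$ is chosen in $U\cap M$; since $V\in\F$ and $F\supseteq V$, upward closure gives $F\in\F$. An ultrafilter $\W$ containing $F$ must, by primeness and the disjointness $V\cap L=\emptyset$, contain one of $V$ and $L\cap U$; the subcase $L\cap U\in\W$ reduces verbatim to the cofinite analysis above, leaving the subcase $V\in\W$ (forcing $M\in\W$) as the crux of the argument. The main obstacle is to establish compactness of $F$ at $U$ against such $M$-concentrated ultrafilters; I plan to do this via the cover-theoretic formulation of ``compact at'', extracting from any $\xi$-cover $\mathcal{S}$ of $U$ a finite subfamily covering $F$ by combining a cofinite-in-$L$ element $S^{*}\in\mathcal{S}$ (forced by $\mathcal{S}$ meeting the $\F_{\mathrm{cof}}$-filter at a point of $M\cap U$), finitely many elements of $\mathcal{S}$ absorbing the finite complement $(L\cap U)\setminus S^{*}$, and elements supplied by the $\mathcal{N}_M(y)^{X\uparrow}$-condition at points $y\in V$. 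The delicate interplay between the $L$-cofinite structure and the $M$-neighborhood structure is precisely the essential step.
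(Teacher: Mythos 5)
Your first two cases are fine, but the third case is not merely ``delicate'' --- it is where the argument breaks down, and the repair you sketch cannot work. For $x\in M$ and $\F\geq\N_{M}(x)$ (extended to $X$), every $F\in\F$ contains some $M$-open $V\ni x$, and every ultrafilter $\W$ on $X$ with $V\in\W$ is concentrated on $M$; by the definition of $\xi$, such a $\W$ has $\adh_{\xi}\W=\lm_{\xi}\W\neq\emptyset$ only if $\W$ is principal or converges in the topology of $M$. Hence $F$ is compact at $U$ only if every ultrafilter containing $V$ converges in $M$ to a point of $U$, that is, only if $V$ is relatively compact in $U\cap M$. Your finite-subcover plan hits the same wall from the cover side: the cover elements ``supplied by the $\N_{M}(y)$-condition at points $y\in V$'' come one per point of the (in general infinite) set $V$, and extracting a finite subfamily covering $V$ from an arbitrary cover of $U$ is precisely core compactness of $M$ at $x$ relative to $U\cap M$. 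But $M$ is chosen so that $[M,\$]$ is not diagonal, hence by Theorem \ref{thm:diagonaldual} it is not injectively core compact and in particular not core compact; whenever some point of $M$ has no relatively compact neighborhood (the case for every Hausdorff non-locally-compact $M$), one finds a free ultrafilter on $V$ with empty $\xi$-adherence, so that no element of $\N_{M}(x)^{\uparrow}$ is compact at any $U$. The reduction to $T$-core compactness of $X$, tested against arbitrary ultrafilters living on subsets of $M$, therefore cannot be completed.

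Note that the paper's argument is of an entirely different nature and never requires any subset of $M$ to be compact at anything: it works directly with a family $\mathbb{P}$ of filters on $\O_{X}$ converging to an open set $O$ in $[X,\$]$, and uses the fact that the cofinite filter of $L$ converges to every point of $M$ (together with the point filters at the isolated points of $O\cap L$) to argue that each $\P\in\mathbb{P}$ contains a member $Q$ with $O\subseteq\bigcap_{W\in Q}W$; convergence of $\bigwedge_{\P\in\mathbb{P}}\P$ then follows because $O$ itself belongs to every filter converging to a point of $O\cap M$. If you want to keep your write-up, you should abandon the passage through compactness of subsets of $X$ and reprove pretopologicity on the function-space side along those lines.
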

\begin{proof}
Let $O\in[X,\$]$ and ${\mathbb{P}}$ be a family of filters on $[X,\$]$
such that $O\in\lim\P$ for every $\P\in{\mathbb{P}}$. Suppose $x\in O\cap L$.
For every $\P\in{\mathbb{P}}$ there is a $P_{\P}\in\P$ such that
$x\in\bigcap P_{\P}$. So, $\bigcap\left(\bigcup_{\P\in{\mathbb{P}}}P_{\P}\right)\in\{x\}^{\uparrow}$.

Suppose that $x\in O\cap M$ and $\F$ is such that $x\in\lim\F$.
We show that for each $\P\in{\mathbb{P}}$, there is a $Q_{\P}\in\P$
such that $O\subseteq Q_{\P}$. Suppose to the contrary that $O\not\subseteq\bigcap P$
for every $P\in\P$. There is a $P_{0}\in\P$ such that $x\in\bigcap P_{0}$.
Consider the filter $\H$ on $L$ generated by the filter-base $\{(O\cap L)\setminus\bigcap P\colon P\in\P\text{ and }P\subseteq P_{0}\}$.
We show that $\H$ is not a free filter. Otherwise, $\H$ is finer
than the cofinite filter on $L$ and therefore $x\in\lim\H$. Since
$\{\bigcap P\colon P\in\P\}$ is a convergence cover of $O$ and $x\in\lim\H$,
there is a $H\in\H$ and a $P\in\P$ such that $H\subseteq\bigcap P$.
Since $O\cap L\setminus\bigcap P\in\H$, we have $\emptyset=H\cap\left(O\cap L\setminus\bigcap P\right)\in\H$.
So, $\H$ is not free and there is $l\in\bigcap\H$. This contradicts
the fact that $\{\bigcap P\colon P\in\P\}$ is in particular a set-theoretic
cover of $O\cap L$. Now $O\subseteq\bigcap\left(\bigcup_{\P\in{\mathbb{P}}}Q_{\P}\right)$,
so that $\bigcap\left(\bigcup_{\P\in{\mathbb{P}}}Q_{\P}\right)\in\F$
because $O$ is open and therefore belongs to $\F$. Hence, $\{\bigcap J\colon J\in\bigwedge_{\P\in{\mathbb{P}}}\P\}$
is a convergence cover of $O$ and $O\in\lim\bigwedge_{\P\in{\mathbb{P}}}\P$.
That is, $X$ is pretopological.\end{proof}
\begin{claim*}
$[X,\$]$ is not diagonal. \end{claim*}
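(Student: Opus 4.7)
The plan is to lift the failure of diagonality from $[M,\$]$ to $[X,\$]$ through the embedding $\psi:\O_M\to\O_X$ given by $\psi(V):=V\cup L$. Open sets of $X$ are easy to describe: $U\inc X$ is open if and only if $U\inc L$, or else $U\cap M$ is open in $M$ and $L\setminus U$ is finite; in particular $\psi(V)\in\O_X$ for every $V\in\O_M$ and $\psi$ is injective with $\psi(V)\cap M=V$.

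Since $[M,\$]$ is not diagonal, fix a selection $\S_M[\cdot]:\O_M\to\Fo\O_M$, a filter $\G$ on $\O_M$, and $V_0\ne\emptyset$ with $V_0\in\lm_{[M,\$]}\G$ but $V_0\notin\lm_{[M,\$]}\S_M[\G]$. Unfolding the failure at the level of the base of the contour, there exist $x\in V_0$ and a filter $\D_M$ on $M$ converging to $x$ such that $\bigcap_{W\in\bigcup_{V\in B}A_V}W\notin\D_M$ for every $B\in\G$ and every choice $A_V\in\S_M[V]$. Set $U_0:=\psi(V_0)$, $\F:=\psi_{\ast}\G$ (with base $\{\psi(B):B\in\G\}$), and define a selection on $\O_X$ by $\S_X[U]:=\psi_{\ast}\S_M[U\cap M]$ when $U\cap M\ne\emptyset$ and $\S_X[U]:=\{U\}^{\uparrow}$ when $U\inc L$. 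The key computation underlying everything is that $\bigcap\psi(A)=(\bigcap_{W\in A}W)\cup L$ for every $A\inc\O_M$.

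Three verifications remain. (a) Each $\S_X[U]$ converges to $U$ in $[X,\$]$: when $U\cap M\ne\emptyset$, the convergence of $\S_M[U\cap M]$ to $U\cap M$ in $[M,\$]$ handles points of $U\cap M$ against filters inherited from $M$, while the ``$\cup L$'' summand in $\bigcap\psi(A)$ automatically covers the cofinite filter on $L$ and every $y\in U\cap L$. (b) The same computation gives $U_0\in\lm_{[X,\$]}\F$. (c) Extend $\D_M$ to a filter $\D$ on $X$ by taking supersets; then $x\in\lm_\xi\D$ because the trace of $\D$ on $M$ is exactly $\D_M$. A base element of $\S_X[\F]$ has the form $\psi(\bigcup_{V\in B}A_V)$, whose intersection equals $(\bigcap_{W\in\bigcup A_V}W)\cup L$. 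A set $S\cup L$ with $S\inc M$ belongs to $\D$ if and only if $S\in\D_M$, so the choice of $x$ and $\D_M$ forces no base element to have intersection in $\D$; since every element of $\S_X[\F]$ is a superset of a base element, hence has smaller intersection, no element of $\S_X[\F]$ has intersection in $\D$. Therefore $U_0\notin\lm_{[X,\$]}\S_X[\F]$, and $[X,\$]$ is not diagonal.

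The main obstacle is the bookkeeping around $L$: I need $\S_X[U]$ to converge to $U$ itself and not just to the larger set $\psi(U\cap M)$, which forces checking the cover condition at every $x\in U\cap L$ and against the cofinite filter on $L$; dually, I must verify that the extension of $\D_M$ to $X$ remains $\xi$-convergent at $x$, which rests on the fact that $\xi$ admits every filter on $X$ whose $M$-trace converges to $x$ in $M$.
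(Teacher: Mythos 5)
Your argument is correct, but it follows a genuinely different route from the paper's. The paper does not work with a raw diagonality witness for $[M,\$]$ at all: it first invokes Theorem \ref{thm:diagonaldual} to convert non-diagonality of $[M,\$]$ into a failure of injective core compactness of $M$ (a point $x$, a family $\mathbb{H}$ of filters, and an injection $\theta$ into $\O_M(J)$), lifts $\theta$ to $\theta^{*}(\H)=L\cup\theta(\H)$, takes for $\F$ the \emph{principal} filter of the set $\theta^{*}(\mathbb{H})$, and builds each $\S[\theta^*(\H)]$ from the sets $\O_{\xi}(X\setminus\cl_{T\xi}H)$, $H\in\H$; an ultrafilter $\U\geq\bigwedge_{\H}\H$ converging to $x$ then defeats the contour. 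Because its filter on $\O_X$ is principal, the paper concludes the stronger statement that $[X,\$]$ is not even $\mathbb{F}_1$-diagonal. You instead transport an arbitrary witness $(\S_M[\cdot],\G,V_0)$ through the embedding $\psi(V)=V\cup L$, and the heart of your proof is the observation that $\bigcap\psi(A)=(\bigcap_{W\in A}W)\cup L$, so the $L$-summand absorbs the cofinite filter on $L$ and the points of $L$, while the $M$-part reproduces the original failure against $\D_M=\N_M(x)$. This is more self-contained (it does not route through the injective-core-compactness characterization) and makes transparent that diagonality failure is simply preserved under $\psi$; the paper's version buys the sharper $\mathbb{F}_1$ conclusion for free, which you would recover only by additionally citing Theorem \ref{thm:diagonaldual} to take $\G$ principal. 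One small point to patch: if $\emptyset\in B$ for some $B\in\G$, then $\psi(\emptyset)=L$ falls under your second clause $\S_X[L]=\{L\}^{\uparrow}$, so the corresponding base element of $\S_X[\F]$ is not literally of the form $\psi\left(\bigcup_{V\in B}A_V\right)$; its intersection is then contained in $L$, hence still not in $\D$, so the conclusion stands, but the statement of the base should be adjusted accordingly.
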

\begin{proof}
Since $[M,\$]$ is not diagonal there is, by Theorem \ref{thm:diagonaldual},
an $x\in M$ and a open neighborhood $J\subseteq M$ of $x$ and a
family $\mathbb{H}$ of filters on $M$ and an injection $\theta\colon{\mathbb{H}}\to\O_{M}(J)$
such that $\adh_{M}\H\cap\theta(\H)=\emptyset$ for each $\H\in\mathbb{H}$
and $x\in\adh_{M}\bigwedge_{\H\in\mathbb{H}}\H$. By our definition
of $\xi$, we have that $x\in\adh_{\xi}\bigwedge_{\H\in\mathbb{H}}\H$.
Let $\U\geq\bigwedge_{\H\in\mathbb{H}}\H$ be an ultrafilter on $M$
such that $x\in\lim_{\xi}\U$.

Define $\theta^{*}\colon{\mathbb{H}}\to\O_{T\xi}(J)$ by $\theta^{*}(\H)=L\cup\theta(\H)$.
Notice that $J\cup L\in\lim_{[X,\$]}\{\theta^{*}(\H)\colon\H\in{\mathbb{H}}\}$.
Since $M$ is closed in $X$, we have $\adh_{M}\H=\adh_{\xi}\H=\adh_{T\xi}\H$
for each $\H\in{\mathbb{H}}$, so that the collection $\{X\setminus\cl_{T\xi}H\colon H\in\H\}$
is an ideal cover of $\theta^{*}(\H)$. Let $\E_{\H}$ be the filter
on $[X,\$]$ that is generated by the collection $\{\O_{\xi}(X\setminus\cl_{T\xi}H)\colon H\in\H\}$.
Then $\theta^{*}(\H)\in\lim_{[X,\$]}\E_{\H}$ for each $\H\in{\mathbb{H}}$,
and the collection $\{\E_{\H}\colon\H\in{\mathbb{H}}\}$ is a selection
for $\{\theta^{*}(\H)\colon\H\in{\mathbb{H}}\}$.

Pick $E_{\H}\in\E_{\H}$ for each $\H\in{\mathbb{H}}$. By definition
of $\E_{\H}$, there is for each $\H$ an $H_{\H}\in\H$ (we may assume
$H_{\H}\subseteq M$) such that $\O_{\xi}(X\setminus\cl_{T\xi}H_{\H})\subseteq E_{\H}$.
So, $\bigcap E_{\H}\subseteq\bigcap\O_{\xi}(X\setminus\cl_{T\xi}H_{\H})=X\setminus\cl_{T\xi}H_{\H}\subseteq X\setminus H_{\H}$.
Now, \[
\bigcap\left(\bigcup_{\H\in{\mathbb{H}}}E_{\H}\right)=\bigcap_{\H\in{\mathbb{H}}}\bigcap E_{\H}\subseteq\bigcap_{\H\in{\mathbb{H}}}X\setminus H_{\H}=X\setminus\left(\bigcup_{\H\in{\mathbb{H}}}H_{\H}\right).\]
 Since $\bigcup_{\H\in{\mathbb{H}}}H_{\H}\in\bigwedge_{\H\in\mathbb{H}}\H$,
$\bigcup_{\H\in{\mathbb{H}}}H_{\H}\in\U$. Hence, $\bigcap\left(\bigcup_{\H\in{\mathbb{H}}}E_{\H}\right)\notin\U$.
Thus, $\left\{ \bigcap E\colon E\in\bigwedge_{\H\in{\mathbb{H}}}\E_{\H}\right\} $
is not a convergence cover of $J\cup L$. Hence, $J\cup L\notin\lim_{[X,\$]}\bigwedge_{\H\in{\mathbb{H}}}\E_{\H}$.
Therefore, $[X,\$]$ is not even $\mathbb{F}_{1}$-diagonal.
\end{proof}
\end{example}

\section{Appendix: convergence spaces}

A family $\A$ of subsets of a set $X$ is called \emph{isotone }if
$B\in\A$ whenever $A\in\A$ and $A\subseteq B$. We denote by $\A^{\uparrow}$
the smallest isotone family containing $\A$, that is, the collection
of subsets of $X$ that contain an element of $\A$. If $\A$ and
$\B$ are two families of subsets of $X$ we say that $\B$ is \emph{finer
than }$\A$, in symbols $\A\leq\B$, if for every $A\in\A$ there
is $B\in\B$ such that $B\subseteq A$. Of course, if $\A$ and $\B$
are isotone, then $\A\leq\B\iff\A\subseteq\B$. This defines a partial
order on isotone families, in particular on the set $\mathbb{F}X$
of filters on $X$. Every family $(\F_{\alpha})_{\alpha\in I}$ of
filters on $X$ admits an infimum \[
\bigwedge_{\alpha\in I}\F_{\alpha}:=\bigcap_{\alpha\in I}\F_{\alpha}=\left\{ \bigcup_{\alpha\in I}F_{\alpha}:F_{\alpha}\in\F_{\alpha}\right\} ^{\uparrow}.\]
On the other hand the supremum even of a pair of filters may fail
to exist. We call \emph{grill of $\A$ }the collection $\A^{\#}:=\{H\subseteq X:\forall A\in\A,\: H\cap A\neq\emptyset\}.$
It is easy to see that $\A=\A^{\#\#}$ if and only if $\A$ is isotone.
In particular $\F=\F^{\#\#}\subseteq\F^{\#}$ if $\F$ is a filter.
We say that two families $\A$ and $\B$ of subsets of $X$ \emph{mesh,
}in symbols $\A\#\B$, if $\A\subseteq\B^{\#},$ equivalently if $\B^{\#}\subseteq\A$.
The supremum of two filters $\F$ and $\G$ exists if and only if
they mesh, in which case $\F\vee\G=\left\{ F\cap G:F\in\F,G\in\G\right\} ^{\uparrow}$.
An infinite family $(\F_{\alpha})_{\alpha\in I}$ of filters has a
supremum $\bigvee_{\alpha\in I}\F_{\alpha}$ if pairwise suprema exist
and for every $\alpha,\beta\in I$ there is $\gamma\in I$ with $\F_{\gamma}\geq\F_{\alpha}\vee\F_{\beta}$.

A \emph{convergence }$\xi$ on a set $X$ is a relation between $X$
and the set $\mathbb{F}X$ of filters on $X$, denoted $x\in\lim_{\xi}\F$
whenever $x$ and $\F$ are in relation, satisfying that $x\in\lim_{\xi}\{x\}^{\uparrow}$
for every $x\in X$, and $\lim_{\xi}\F\subseteq\lim_{\xi}\G$ whenever
$\F\leq\G$. The pair $(X,\xi$) is called a \emph{convergence space.
}A function $f:(X,\xi)\to(Y,\sigma)$ between two convergence space
is \emph{continuous }if \[
x\in\lm_{\xi}\F\then f(x)\in\lm_{\sigma}f(\F),\]
where $f(\F)$ is the filter $\{f(F):F\in\F\}^{\uparrow}$ on $Y$.
If $\xi$ and $\tau$ are two convergences on the same set $X$, we
say that $\xi$ is \emph{finer than }$\tau$, in symbols $\xi\geq\tau$,
if $\lim_{\xi}\F\subseteq\lim_{\tau}\F$ for every $\F\in\mathbb{F}X$.
This defines a partial order on the set of convergence structures
on $X$, which defines a complete lattice for which supremum $\vee_{i\in I}\xi_{i}$
and infimum $\wedge_{i\in I}\xi_{i}$ of a family $\{\xi_{i}:i\in I\}$
of convergences are defined by \[
\lm_{\vee_{i\in I}\xi_{i}}\F=\bigcap_{i\in I}\lm_{\xi_{i}}\F,\]
\[
\lm_{\wedge_{i\in I}\xi_{i}}\F=\bigcup_{i\in I}\lm_{\xi_{i}}\F.\]

Every topology can be identified with a convergence, in which $x\in\lim\F$
if $\F\geq\N(x)$, where $\N(x)$ is the neighborhood filter of $x$
for this topology. A convergence obtained this way is called \emph{topological.
}Moreover, a function $f:X\to Y$ between two topological spaces is
continuous in the usual topological sense if and only if it is continuous
in the sense of convergence. On the other hand, every convergence
determines a topology in the following way: A subset $C$ of a convergence
space $(X,\xi)$ is \emph{closed }if $\lim_{\xi}\F\subseteq C$ for
every filter $\F$ on $X$ with $C\in\F$. A subset $O$ is \emph{open
}if its complement is closed, that is, if $O\in\F$ whenever $\lim_{\xi}\F\cap O\neq\emptyset.$
The collection of open subsets for a convergence $\xi$ is a topology
$T\xi$ on $X$, called \emph{topological modification of $\xi$}.
The topology $T\xi$ is the finest topological convergence coarser
than $\xi.$ $ $If $f:(X,\xi)\to(Y,\tau)$ is continuous, so is $f:(X,T\xi)\to(Y,T\tau)$.
In other words, $T$ is a concrete endofunctor of the category \textbf{Conv
}of convergence spaces and continuous maps. 

Continuity induces canonical notions of subspace convergence, product
convergence, and quotient convergence. Namely, if $f:X\to Y$ and
$Y$ carries a convergence $\tau$, there is the coarsest convergence
on $X$ making $f$ continuous (to $(Y,\tau)$). It is denoted $f^{-}\tau$
and called \emph{initial convergence for $f$ and $\tau.$ }For instance
if $S\subseteq X$ and $(X,\xi)$ is a convergence space, the \emph{induced
convergence by $\xi$ on $S$ }is by definition $i^{-}\xi$ where
$i$ is the inclusion map of $S$ into $X$. Similarly, if $\{(X_{i},\xi_{i}):i\in I\}$
is a family of convergence space, then the product convergence $\Pi_{i\in I}\xi_{i}$
on the cartesian product $\Pi_{i\in I}X_{i}$ is the coarsest convergence
making each projection $p_{j}:\Pi_{i\in I}X_{i}\to X_{j}$ continuous.
In other words, $\Pi_{i\in I}\xi_{i}=\vee_{i\in I}p_{i}^{-}\xi_{i}$.
In the case of a product of two factors $(X,\xi)$ and $(Y,\tau)$,
we write $\xi\times\tau$ for the product convergence on $X\times Y$. 

Dually, if $f:X\to Y$ and $(X,\xi)$ is a convergence space, there
is the finest convergence on $Y$ making $f$ continuous (from $(X,\xi)$).
It is denoted $f\xi$ and called \emph{final convergence for $f$
and $\xi$.} If $f:(X,\xi)\to Y$ is a surjection, the associated
\emph{quotient convergence on $Y$ }is $f\xi$. Note that if $\xi$
is a topology, the quotient topology is not $f\xi$ but $Tf\xi.$ 

The functor $T$ is a reflector. In other words, the subcategory \textbf{Top
}of \textbf{Conv }formed by topological spaces and continuous maps
is closed under initial constructions. Note however that the functor
$T$ does not commute with initial constructions. In particular $T\xi\times T\tau\leq T(\xi\times\tau)$
but the reverse inequality is genrally not true. Similarly, if $i:S\to(X,\xi)$
is an inclusion map, $i^{-}(T\xi)\leq T(i^{-}\xi$) but the reverse
inequality may not hold. A convergence $\xi$ is \emph{pretopological
}or a \emph{pretopology }if $\lim_{\xi}\bigwedge_{\alpha\in I}\F_{\alpha}=\bigcap_{\alpha\in I}\lim_{\xi}\F_{\alpha}$.
Of course, every topology is a pretopology, but not conversely. For
any convergence $\xi$ there is the finest pretopology $P\xi$ coarser
than $\xi$. Moreover, $x\in\lim_{P\xi}\F$ if and only if $\F\geq\V_{\xi}(x)$
where $\V_{\xi}(x):=\bigwedge_{x\in\lm_{\xi}\F}\F$ is called \emph{vicinity
filter of $x$. }The subcategory \textbf{PrTop }of \textbf{Conv }formed
by pretopological spaces and continuous maps is reflective (closed
under initial constructions). Moreover, in contrast with topologies,
the reflector $P$ commutes with subspaces. However, like $T,$ it
does not commute with products. 

The \emph{adherence $\adh_{\xi}\F$ }of a filter $\F$ on a convergence
space $(X,\xi)$ is by definition \[
\adh_{\xi}\F:=\union_{\H\#\F}\lm_{\xi}\H=\union_{\U\in\mathbb{U}(\F)}\lm_{\xi}\U,\]

where $\mathbb{U}X$ denotes the set of ultrafilters on $X$ and $\mathbb{U}(\F)$
denotes the set of ultrafilters on $X$ finer than the filter $\F$.
We write $\adh_{\xi}A$ for $\adh_{\xi}\{A\}^{\uparrow}.$ Note that
in a convergence space $\adh_{\xi}$ may not be idempotent on subsets
of $A$. In fact a pretopology is a topology if and only if $\adh$
is idempotent on subsets. We reserve the notations $\cl$ and $\intr$
to topological closure and interior operators. 

A family $\A$ of subsets of $X$ is \emph{compact at a family $\B$
for $\xi$ if \[
\F\#\A\then\adh_{\xi}\F\#\B.\]
 }We call a family \emph{compact }if it is compact at itself. In particular,
a subset $A$ of $X$ is \emph{compact }if $\{A\}$ is compact, and
\emph{compact at} $B\subseteq X$ if $\{A\}$ is compact at $\{B\}$. 

Given a class $\mathbb{D}$ of filters, a convergence is called \emph{based
in $\mathbb{D}$ }or\emph{ $\mathbb{D}$-based }if for every convergent
filter $\F,$ say $x\in\lim\F$, there is a filter $\D\in\mathbb{D}$
with $\D\leq\F$ and $x\in\lim\D$. A convergence is called \emph{locally
compact }if every convergent filter contains a compact set, and \emph{hereditarily
locally compact }if it is based in filters with a filter-base composed
of compact sets. For every convergence, there is the coarsest locally
compact convergence $K\xi$ that is finer than $\xi$ and the coarsest
hereditarily locally compact convergence $K_{h}\xi$ that is finer
than $\xi$. Both $K$ and $K_{h}$ are concrete endofunctors of \textbf{Conv
}that are also coreflectors. 

If $A\subseteq X$ and $(X,\xi)$ is a convergence space, then $\O(A)$
denotes the collection of open subsets of $X$ that contain $A$ and
if $\A$ is a family of subsets of $X$ then $\O(\A):=\union_{A\in\A}\O(A)$.
A family is called \emph{openly isotone }if $\A=\O(\A)$. Note that
in a topological space $X$, an openly isotone family $\A$ of open
subsets of $X$ is compact\emph{ }if and only if, whenever $\union_{i\in I}O_{i}\in\A$
and each $O_{i}$ is open, there is a finite subset $J$ of $I$ such
that $\union_{i\in J}O_{i}\in\A$. 

If $(X,\xi)$ and $(Y,\sigma)$ are two convergence spaces, $C(X,Y)$
or $C(\xi,\sigma$) deonte the set of continuous maps from $X$ to
$Y$. The coarsest convergence on $C(X,Y)$ making the evaluation
map $e:X\times C(X,Y)\to Y$, $e(x,f)=f(x),$ jointly continuous is
called \emph{continuous convergence }and denoted $[X,Y]$ or $[\xi,\sigma]$.
Explicitely, \[
f\in\lm_{[X,Y]}\F\iff\forall_{x\in X}\forall_{\G\in\mathbb{F}X:x\in\lm_{\xi}\G}\,\, f(x)\in\lm_{\sigma}e\left(\G\times\F\right).\]


\begin{thebibliography}{19}
\bibitem{dol.initiation} S. ~Dolecki, \emph{An initiation into convergence
theory}, in \emph{Beyond Topology}, Contemporary Mathematics 486,
Mynard and Pearl (eds), AMS, 2009, 115-161.

\bibitem{dolecki.pannonica} S.~Dolecki, \emph{Properties transfer
between topologies on function spaces, hyperspaces and underlying
spaces}, Mathematica Pannonica, \textbf{19}(2) (2008), 243-262.

\bibitem{DG.pretop} S. Dolecki and G. Greco. \newblock \emph{Topologically
maximal pretopologies.} \newblock Studia Math. \textbf{77} (1984),
no. 3, 265--281.

\bibitem{DGL.kur} S.~Dolecki, G.~H. Greco, and A.~Lechicki, \emph{When
do the upper {K}uratowski topology (homeomorphically, {S}cott
topology) and the cocompact topology coincide?}, Trans. Amer. Math.
Soc. \textbf{347} (1995), 2869--2884.

\bibitem{DJM.group} S. Dolecki, F. Jordan and F. Mynard. \newblock
\emph{Group topologies coarser than the Isbell topology} \newblock
to appear in Topology and its Applications.

\bibitem{DM.mech} S. Dolecki and F. Mynard, \emph{Convergence-theoretic
mechanisms behind product theorems}, Topology Appl., \textbf{104}(2000):
67--99.

\bibitem{dolmyn.isbell} S. Dolecki and F. Mynard, \emph{When is the
Isbell topology a group topology?}, Topology Appl. \textbf{157}(8):1370--1378,
2010.

\bibitem{DM.transfer} S. Dolecki and F. Mynard. \emph{A unified theory
of function spaces and hyperspaces: local properties}, submitted.

\bibitem{contlattices} Gierz, Hofmann, Keimel, Lawson, Mislove and
Scott, \emph{Continuous lattices and domains}, Encyclopedia of mathematics
and its applications 93, Cambridge University Press, 2003.

\bibitem{dcpoconv} R. Heckmann. \newblock \emph{A non-topological
view of dcpos as convergence spaces.} \newblock Theoret. Comput.
Sci. 305 (2003), no. 1-3, 159--186.

\bibitem{hofflawson} K.~H. Hofmann and J.~D. Lawson. \newblock
{\em The spectral theory of distributive continuous lattices.}
\newblock Trans. Amer. Math. Soc., \textbf{246}:285--309, 1978.

\bibitem{Isbell_a} J.R. Isbell, \emph{Function spaces and adjoints},
Math. Scandinavica \textbf{36} (1975), 317--339.

\bibitem{Isbell_b} J.R. Isbell, \emph{Meet-continuous lattices},
Symposia Mathematica \textbf{16} (1975), 41--54.

\bibitem{francis.coincide} F. Jordan, \emph{Coincidence of function
space topologies}, Top. Appl., \textbf{157}(2), 336--351, 2010

\bibitem{francis.split} F. Jordan, \emph{Coincidence of the Isbell
and fine Isbell topologies}, to appear in Top. Appl.

\bibitem{corecompactdual} X. Lin and S. Yujin, \emph{On core compactness
of semilattices and frames with the Scott open filter topology}, Q.
and A. in Gen. Top., \textbf{15}:189--194, 1997.

\bibitem{openproblems2} Elliot Pearl (ed.), \emph{Open problems in
topology {II}}, Elsevier, 2007.

\bibitem{schwarz.powers} F.~Schwarz. \newblock Powers and exponential
objects in initially structured categories and application to categories
of limits spaces. \newblock {\em Quaest. Math.}, \textbf{6}:227--254,
1983.

\bibitem{schwarz.compatible} F.~Schwarz. \newblock {\em Product
compatible reflectors and exponentiability.} \newblock Categorical
topology (Toledo, Ohio, 1983), 505--522, Sigma Ser. Pure Math., 5,
Heldermann, Berlin, 1984. 
\end{thebibliography}
\end{document}